\numberwithin{equation}{subsection} 
\newtheorem{thm}{Theorem}[section]
\newtheorem{prop}[thm]{Proposition}
\newtheorem{pdef}[thm]{Proposition-Definition}
\newtheorem{lem}[thm]{Lemma}
\newtheorem{sublem}[thm]{Sub-lemma}
\newtheorem{cor}[thm]{Corollary}
\newtheoremstyle{bidule}
{3pt}
{3pt}
{}
{}
{\scshape}
{.}
{.5em}
{}
\newtheorem{df}[thm]{Definition}
\theoremstyle{definition}
\newtheorem{rmk}{Remark}[section]
\newtheorem*{warn}{Warning}
\newtheorem{nota}{Notation}[section]
\newcommand{\C}{\mathcal{C}}
\newcommand{\Ub}{\mathcal{U}}
\newcommand{\F}{\mathcal{F}}
\newcommand{\D}{\mathcal{D}}
\newcommand{\Ba}{\mathcal{B}}
\newcommand{\Ga}{\mathcal{G}}
\newcommand{\Aa}{\mathcal{A}}
\newcommand{\M}{\mathscr{M}}
\newcommand{\J}{\mathcal{J}} 
\newcommand{\Ea}{\mathcal{E}} 
\newcommand{\G}{\mathcal{G}}
\renewcommand{\to}{\longrightarrow}
\newcommand{\ul}{\underline}
\newcommand{\Ob}{\text{Ob}}
\newcommand{\uc}{\mathds{1}} 
\newcommand{\tx}{\text}
\renewcommand{\to}{\longrightarrow}
\DeclareMathOperator\Id{Id}
\DeclareMathOperator\Hom{Hom}
\DeclareMathOperator\HOM{\ul{Hom}}
\DeclareMathOperator\Set{\textbf{Set}} 
\DeclareMathOperator\Lax{Lax}
\DeclareMathOperator\dCat{\mathbf{2-Cat}} 
\DeclareMathOperator\mdcat{\M_{lax}(\dCat)}
\DeclareMathOperator\dfunc{2\tx{-Func}}
\DeclareMathOperator\El{\tx{El}}
\title{A calculus on lax functors}
\author{Hugo V. Bacard }
\affil{Western University}
\date{}
\begin{document}
\maketitle
\begin{abstract}
We verify that Kelly's constructions of the internal Hom for enriched categories extends naturally to lax functors taking their values in a symmetric monoidal category. Our motivation is to set up a `calculus on lax functors' that will host the theory of weakly enriched categories that are defined by lax diagrams. 
\end{abstract}
\setcounter{tocdepth}{1}
\tableofcontents 
\section{Introduction}
The theory of enriched categories can be viewed, in a certain way, as a small part of the theory of lax functors. This remark goes back to Bénabou \cite{Ben2} and Street \cite{Str} who  observed that enriched categories are the same as thing as lax functors indexed by \emph{indiscrete categories} (also called \emph{
chaotic}, or \emph{coarse} categories). But the theory of enriched categories has not been fully developed within the theory of lax functors, as far as the author knows.  And the purpose of the paper is to take a little step toward this. A \emph{calculus for lax diagrams} aims to provide the necessary tools and constructions that are needed to develop a theory for weakly enriched categories as initiated in \cite{COSEC1}. It should be an analogue of the theory developed by Joyal \cite{Joyal_qcat} and Lurie \cite{Lurie_HTT} on simplicial sets for  quasicategories.\\

We show that there exists an internal Hom for lax diagrams indexed by strict $2$-categories and taking their values in a symmetric monoidal category $\M=(\ul{M}, \otimes,I)$. Here we assume $\M=(\ul{M}, \otimes,I)$ to have nice properties e.g, cocomplete and complete.\\
To do this, we consider the analogue of a natural transformation in the context of lax functors; we call them \emph{nerve-transformations} (Definition \ref{def-nerve-trans}). With this ingredient at hand \emph{`Kelly calculus'} applies naturally except that we don't mention the notion of \emph{extraordinary naturality}. With some basic category theory we get  our main result (Theorem \ref{thm-adjunction}) which gives the desired adjunction.\\

We shall close this introduction with few comments. 
\begin{enumerate}
\item As for enriched categories, the existence of an internal Hom for lax functors, allows to construct new symmetric \ul{closed} monoidal categories from old ones. In particular when $\M$ is the category of chain complexes over some ring, then iteratively one builds higher linear categories (using for example the co-Segal formalism). The author was told that an idea of \emph{deformation of higher linear categories} becomes necessary in many fields. 
\item Given a lax functor $\F$, we can ask what should be the corresponding constructions that exist for enriched categories e.g, free completion of $\F$, Cauchy completion (Karoubi envelope), etc. 
\item In the presence of weak equivalences in $\M$, we're mainly interested in lax diagrams $\F$, that are \emph{category-like}. There are triples $(\F,\C,\Aa)$ where $\F: \C \to \M$ is a lax functor; $\Aa \subset \C$ is a sub-$2$-category such that the restriction $\F_{|\Aa}: \Aa \to \M$ sends every $2$-morphism of $\Aa$ to a weak equivalence in $\M$. And the theory of lax functors should be developed to study their homotopy theory. Among such lax functor, we have the one we called \emph{co-Segal enriched categories} (see \cite{COSEC1}) which originally motivated all of this. 
\item Finally there is no reason to limit this discussion for monoidal categories. One should be able to adapt the constructions for a (symmetric) monoidal $2$-category. We didn't go through that here. 
\end{enumerate}

\begin{warn}
In this paper all size issues on 2-categories and categories have been left aside. Some notations may look ``bizarre'' but we use them to keep track of everything. For example when two pairs of morphisms $(f,g)$ and $(r,s)$ have the same composite i.e, $f\otimes g= r \otimes s= u$ and given a functor $\F$ we will sometime write 
$$\F(u)=\frac{\F(f\otimes g)}{\F(r \otimes s)}.$$
In some diagrams we sometimes replace `$\otimes$' by a `$\cdot$' to get a smaller diagram.
\end{warn}

\section{Tensor product of lax functors}
Let $\C$, $\D$ be two strict $2$-categories and $\M=(\ul{M},\otimes,I)$ be a symmetric monoidal \ul{closed} category. As usual we will identify $\M$ with a $2$-category with a single object. In the following we are interested in the Lax-Yoneda functor of points $\Lax(-,\M)$.  For each $\C$ we have a category $\Lax(\C,\M)$ of lax functors and transformations that are icons in the sense of Lack \cite{Lack_icons}. If $\F \in  \Lax(\C,\M)$ and $\Ga \in \Lax(\D,\M)$, a morphism  $\sigma: \F \to \Ga$ is by definition a pair $(p,\sigma)$ where, $p: \C \to \D$ is a strict $2$-functor and $\sigma:  \F \to p^{\star}\Ga$ is a morphism in $\Lax(\C,\M)$. \ \\ We will denote by $\M_{lax}(\dCat)$ the fibred category over $\dCat$ where the fiber of $\C$ is $\Lax(\C,\M)$.\ \\

We have an obvious point-wise tensor product functor:
$$\boxtimes: \Lax(\C,\M) \times \Lax(\D,\M) \to \Lax(\C \times \D,\M)$$  
that makes $\M_{lax}(\dCat)$  a symmetric monoidal category whose unit object is the lax morphism $\J: \uc \to \M$ which expresses $I$ as the trivial monoid.
\paragraph{Our main goal} We want to find out if there is an internal $\Hom$ in $\mdcat$ with respect to $\boxtimes$ when $\M$ has itself an internal $\HOM$. This will generalized the functor category between $\M$-categories (see \cite{Ke}).

\section{Generalization of natural transformations}
Let $\F \in \Lax(\C,\M)$ and $\G \in \Lax(\D,\M)$ be two lax morphisms and $\sigma_1=(p_1,\sigma_1)$, $  \sigma_2=(p_2,\sigma_2)$ be two parallel morphisms in $\mdcat$ from  $\F$ to $\Ga$.  The definition we consider below makes sense even if we consider a $2$-category $\M$.

\begin{df}\label{def-nerve-trans}
A \textbf{nerve-transformation} $\eta: \sigma_1 \to \sigma_2$ consists of the following data and axioms.\ \\ 
\ \\
\textbf{\ul{Data:}} 
\begin{itemize}[label=$-$]
\item a \ul{strict}  $2$-natural-transformation of $2$-functors $\alpha: p_1 \to p_2$;
\item  A family $\{\eta_c: \Id_{\ast} \to \G( \alpha_c)\}_{c \in \Ob(\C)}$ of  $2$-morphisms of $\M$; where $\alpha_c: p_1(c) \to p_2(c)$ is the component of $\alpha$. 
\end{itemize}
\ \\
\textbf{\ul{Axiom:}} 
For every $1$-morphism $c \xrightarrow{f}  d$ of $\C$, the following \emph{hexagon} commutes:
\[
\xy
(-30,10)*+{\F(f)}="A";
(0,18)*+{I \otimes \F (f)}="W";
(0,0)*+{\F(f) \otimes I}="X";
(30,0)*+{\G p_2(f) \otimes  \G \alpha_c}="Y";
(30,18)*+{\G \alpha_d \otimes \G p_1(f)}="E";
(70,10)*+{\ \ \ \ \ \ \ \ \ \  \ \ \ \ \ \G[p_2(f) \otimes \alpha_c]= \G[\alpha_d \otimes p_1(f)]}="B";
{\ar@{->}^-{\sigma_2 \otimes \eta_c}"X";"Y"};
{\ar@{->}^-{\eta_d \otimes \sigma_1}"W";"E"};
{\ar@{->}^-{\cong}"A";"W"};
{\ar@{->}^-{\cong}"A";"X"};
{\ar@{->}^-{}"Y";"B"};
{\ar@{->}^-{}"E";"B"};
\endxy
\]
\end{df}

\begin{nota}
We will denote by $\Hom(\sigma_1, \sigma_2)$ the `set' of all nerve-transformations $\eta: \sigma_1 \to \sigma_2$. 
\end{nota}

\subsection{Composition of nerve-transformations}

Given $\eta=(p_1 \xrightarrow{\alpha} p_2,\{\eta_c\}) \in  \Hom(\sigma_1, \sigma_2)$ and $\eta'=( p_2 \xrightarrow{\alpha'} p_3,\{\eta'_c\}) \in \Hom(\sigma_2,  \sigma_3)$, we define the \textbf{horizontal composite} nerve-transformation $\eta' \otimes \eta: \sigma_1 \to \sigma_3$ given by:
\begin{itemize}[label=$-$]
\item the strict transformation $\alpha' \otimes \alpha: p_1 \to p_3$
\item the family of $2$-morphisms 
$$(\eta' \otimes \eta)_c: I \xrightarrow{\cong} I \otimes I \xrightarrow{\eta'_c \otimes \eta_c} \G(\alpha'_c) \otimes \G(\alpha_c) \xrightarrow{\tx{laxity}} \G[(\alpha' \otimes \alpha)_c].$$
\end{itemize}
We shall now check that these data satisfy the coherence conditions. This is a consequence of the fact that each of the following \emph{hexagons} commutes:

\[
\xy
(-20,10)*+{\F(f)}="A";
(0,18)*+{I \otimes \F (f)}="W";
(0,0)*+{\F(f) \otimes I}="X";
(30,0)*+{\G p_2(f) \otimes  \G \alpha_c}="Y";
(30,18)*+{\G \alpha_d \otimes \G p_1(f)}="E";
(60,10)*+{ \frac{\G[\alpha_d \otimes p_1(f)]}{\G[p_2(f) \otimes \alpha_c]}} ="B";
{\ar@{->}^-{\sigma_2 \otimes \eta_c}"X";"Y"};
{\ar@{->}^-{\eta_d \otimes \sigma_1}"W";"E"};
{\ar@{->}^-{\cong}"A";"W"};
{\ar@{->}^-{\cong}"A";"X"};
{\ar@{->}^-{}"Y";"B"};
{\ar@{->}^-{}"E";"B"};
\endxy
\]
\[
\xy
(-20,10)*+{\F(f)}="A";
(0,18)*+{I \otimes \F (f)}="W";
(0,0)*+{\F(f) \otimes I}="X";
(30,0)*+{\G p_3(f) \otimes  \G \alpha'_c}="Y";
(30,18)*+{\G \alpha'_d \otimes \G p_2(f)}="E";
(60,10)*+{ \frac{\G[\alpha'_d \otimes p_2(f)]}{\G[p_3(f) \otimes \alpha'_c]}} ="B";
{\ar@{->}^-{\sigma_3 \otimes \eta'_c}"X";"Y"};
{\ar@{->}^-{\eta'_d \otimes \sigma_2}"W";"E"};
{\ar@{->}^-{\cong}"A";"W"};
{\ar@{->}^-{\cong}"A";"X"};
{\ar@{->}^-{}"Y";"B"};
{\ar@{->}^-{}"E";"B"};
\endxy
\]

The idea is that the first hexagon can be considered as `$\eta_d \otimes \sigma_1 = \sigma_2 \otimes \eta_c$' whereas the other one is `$\eta'_d \otimes \sigma_2 = \sigma_3 \otimes \eta'_c$'. And if we tensor on both sides by $\eta'_d$ in the first equality, one successively establishes the following.
\begin{equation*}
\begin{split}
\eta'_d \otimes (\eta_d \otimes \sigma_1)&=\eta'_d \otimes (\sigma_2 \otimes \eta_c)  \\
&= (\eta'_d \otimes \sigma_2) \otimes \eta_c  \\
&= (\sigma_3 \otimes \eta'_c ) \otimes \eta_c \\
&= \sigma_3 \otimes (\eta'_c  \otimes \eta_c). \\
\end{split}
\end{equation*}
So that we have `($\eta'_d \otimes \eta_d) \otimes \sigma_1 = \sigma_3 \otimes (\eta'_c  \otimes \eta_c)$' which means that $\eta' \otimes \eta$ satisfies the coherence condition.  This is the key idea behind and we give the different steps diagrammatically below. 

\paragraph*{Step 1} Tensor (on the left)  the first hexagon by $\eta'_d: I \to \G(\alpha'_d)$ to get a new one:
\[
\xy
(-20,10)*+{I \otimes \F(f)}="A";
(0,18)*+{I \otimes I \otimes \F (f)}="W";
(0,0)*+{I \otimes \F(f) \otimes I}="X";
(40,0)*+{\G \alpha'_d \otimes \G p_2(f) \otimes  \G \alpha_c}="Y";
(40,18)*+{\eta'_d \otimes \G \alpha_d \otimes \G p_1(f)}="E";
(70,10)*+{ \G \alpha'_d \otimes \frac{\G[\alpha_d \otimes p_1(f)]}{\G[p_2(f) \otimes \alpha_c]}} ="B";
(120,10)*+{ \frac{\G[\alpha'_d \otimes \alpha_d \otimes p_1(f) ]}{\G[\alpha'_d \otimes p_2(f) \otimes \alpha_c]}}="C";
{\ar@{->}^-{\eta'_d \otimes \sigma_2 \otimes \eta_c}"X";"Y"};
{\ar@{->}^-{\eta'_d \otimes \eta_d \otimes \sigma_1}"W";"E"};
{\ar@{->}^-{\cong}"A";"W"};
{\ar@{->}^-{\cong}"A";"X"};
{\ar@{->}^-{}"Y";"B"};
{\ar@{->}^-{}"E";"B"};
{\ar@{-->}^-{}"B";"C"};
\endxy
\]

\paragraph*{Step 2} Tensor (on the right) the second hexagon by $\eta_c$ to get the new one:
\[
\xy
(-20,10)*+{\F(f) \otimes I}="A";
(0,18)*+{I \otimes \F (f) \otimes I}="W";
(0,0)*+{\F(f) \otimes I \otimes I}="X";
(40,0)*+{\G p_3(f) \otimes  \G \alpha'_c \otimes \G \alpha_c}="Y";
(40,18)*+{\G \alpha'_d \otimes \G p_2(f)\otimes \G \alpha_c}="E";
(70,10)*+{ \frac{\G[\alpha'_d \otimes p_2(f)]}{\G[p_3(f) \otimes \alpha'_c]} \otimes \G \alpha_c} ="B";
(120,10)*+{ \frac{\G[\alpha'_d \otimes p_2(f) \otimes \alpha_c ]}{\G[p_3(f) \otimes \alpha'_c \otimes \alpha_c]}}="C";
{\ar@{->}^-{\sigma_3 \otimes \eta'_c \otimes \eta_c}"X";"Y"};
{\ar@{->}^-{\eta'_d \otimes \sigma_2 \otimes \eta_c }"W";"E"};
{\ar@{->}^-{\cong}"A";"W"};
{\ar@{->}^-{\cong}"A";"X"};
{\ar@{->}^-{}"Y";"B"};
{\ar@{->}^-{}"E";"B"};
{\ar@{-->}^-{}"B";"C"};
\endxy
\]

\paragraph*{Step 3} Join the last laxity map and `glue' the two hexagons using the symmetry of $\otimes$. This is possible thanks to the coherence for the lax functor $\G$, which says that given $3$-composable $1$-morphisms $s,t,u$ in $\D$, then the diagram below commutes:
\[
\xy
(-30,10)*+{\G s \otimes \G t  \otimes \G u}="A";
(0,18)*+{\G (s \otimes  t)  \otimes \G u}="W";
(0,0)*+{\G s \otimes \G (t  \otimes  u)}="X";
(30,10)*+{\G(s \otimes t \otimes u)} ="B";
{\ar@{->}^-{}"A";"W"};
{\ar@{->}^-{}"A";"X"};
{\ar@{->}^-{}"X";"B"};
{\ar@{->}^-{}"W";"B"};
\endxy
\]

So we have a composite $\eta' \otimes \eta: \sigma_1 \to \sigma_3$ as claimed. This define a function 
$$\otimes: \Hom(\sigma_1, \sigma_2) \times  \Hom(\sigma_2, \sigma_3) \to \Hom(\sigma_1, \sigma_3).$$

Again thanks to the coherence for the lax functor $\G$, it's easy to see that this composition is associative so that we have a category $nT(\F,\G)$ whose objects are lax transformations $\sigma: \F \to \G$ and whose morphisms are the nerve-transformations. \\

Given $\sigma=(p, \sigma): \F \to \G$, the identity nerve transformation $\Id_{\sigma}$ is the one given by $\Id_p$ and the identity $\Id_{I}: I \to I$.
\section{The $2$-category of elements of a lax functor}
Let $\F: \C \to \M$ be a lax functor. The purpose of this section is to construct, functorially, a $2$-category $\El(\F)$ equipped with a canonical $2$-functor $\Phi:\El(\F) \to \C$ which is locally a Grothendieck op-fibration.
\paragraph{Description of  $\El(\F)$} The objects of $\El(\F)$ are the same as of $\C$. If $A,B \in \C$ then the category of morphisms $\El(\F)(A,B)$ is the category of element of the $\Set$-valued functor:
$$ \C(A,B) \xrightarrow{\F_{AB}} \ul{M} \xrightarrow{\Hom(I,-)} \Set.$$

We have an op-fibration $ \Phi: \El(\F)(A,B) \to \C(A,B)$. In elementary words, a $1$-morphism in 
$\El(\F)(A,B)$ is an `\emph{element}' $I \to \F(k)$ where $k: A \to B$ is a $1$-morphism in $\C(A,B)$. A $2$-morphism between two such elements is  a $2$-morphism $\alpha : k \to k'$ in $\C$, such that the obvious triangle commutes.
The $2$-functor $\Phi$ is the identity on objects and takes $I \to \F(k)$ to $k$; and takes morphism of elements to $\alpha$. \ \\

The composition is the natural one i.e, if $I \to \F(k), I \to \F(k')$ are two composable elements, then the composite element is given by:
$$ I \cong I \otimes I  \to F(k) \otimes F(k') \xrightarrow{\tx{laxity}} F(k\otimes k').$$

And we leave the reader to check that:
\begin{prop}
The above data define a $2$-category $\El(\F)$ together with a $2$-functor 
$$\Phi:\El(\F) \to \C.$$

The operation $\F \mapsto \El(\F)$  defines a $2$-functor $\El: \M_{lax}(\dCat) \to \dCat$.
\end{prop}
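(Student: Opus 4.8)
The plan is to prove the two assertions in turn: first that the stated data assemble into a genuine strict $2$-category $\El(\F)$ equipped with a strict $2$-functor $\Phi$ to $\C$, and then that $\F \mapsto \El(\F)$ is functorial in a strict $2$-categorical sense on $\mdcat$, with the nerve-transformations of Definition \ref{def-nerve-trans} playing the role of the $2$-cells of the source.

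For the first assertion, observe that nothing has to be checked at the level of a single hom-category: each $\El(\F)(A,B)$ is by construction the category of elements of a $\Set$-valued functor, hence is already a category. What does require verification is that the composition of elements given in the statement extends to composition functors $\El(\F)(B,C) \times \El(\F)(A,B) \to \El(\F)(A,C)$ that are strictly associative and unital. First I would check bifunctoriality: given two morphisms of elements, i.e. $2$-cells $\alpha,\beta$ of $\C$ whose defining triangles commute, their horizontal composite again makes the corresponding triangle commute, and this is precisely the naturality of the laxity maps of $\F$ with respect to $2$-cells. The identity $1$-morphism at $A$ is the element $I \to \F(\Id_A)$ furnished by the unit constraint of $\F$.

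Associativity and the unit laws are then direct transcriptions of the coherence axioms of the lax functor $\F$: strict associativity of the composite of three elements is the commutativity of the laxity-associativity triangle for $\F$ (the analogue for $\F$ of the triangle displayed above for $\G$), and the unit laws come from the unit coherence of $\F$. The $2$-functor $\Phi$ is the evident projection --- identity on objects, $(I \to \F(k)) \mapsto k$ on $1$-cells, and $\alpha \mapsto \alpha$ on $2$-cells --- and it is strict because composite and identity elements lie, by construction, over the composite and identity $1$-morphisms of $\C$. Each component $\Phi_{AB}$ is the canonical projection out of a category of elements, which is a discrete op-fibration, yielding the ``locally a Grothendieck op-fibration'' property of $\Phi$.

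For the second assertion I would define $\El$ on $1$-cells and on nerve-transformations, then check strict functoriality. On a morphism $(p,\sigma): \F \to \G$ of $\mdcat$, set $\El(p,\sigma)$ to be $p$ on objects, to send an element $\xi: I \to \F(k)$ to the element $\sigma \circ \xi : I \to \G(p k)$ lying over $p k$, and to send a $2$-cell $\alpha$ to $p(\alpha)$. Strict preservation of the composition of elements is exactly where the icon condition on $\sigma$ is used: the coherence square relating the laxity maps of $\F$ and of $\G$ through $\sigma$ says that the two ways of forming a composite element coincide. On a nerve-transformation $\eta = (\alpha,\{\eta_c\}): (p_1,\sigma_1) \to (p_2,\sigma_2)$, I would take the component of $\El(\eta)$ at $A$ to be the $1$-morphism of $\El(\G)$ given by the element $\eta_A : I \to \G(\alpha_A)$ over $\alpha_A: p_1(A) \to p_2(A)$. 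Precomposing the hexagon of Definition \ref{def-nerve-trans} with an element $\xi: I \to \F(k)$ turns it exactly into the $2$-naturality square for this family, while strictness of $\alpha$ guarantees that source and target $1$-morphisms match; hence $\El(\eta)$ is a strict $2$-natural transformation. Preservation of the composition of nerve-transformations reduces to the computation carried out above, where the component of $\eta' \otimes \eta$ was built from $\eta'_c$ and $\eta_c$ via the laxity of $\G$ --- the very recipe by which composites are formed in $\El(\G)$.

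The main obstacle I anticipate is not the bookkeeping of associativity and units inside $\El(\F)$, which is a mechanical transcription of the coherence of $\F$, but the strict preservation of composition by $\El(p,\sigma)$: one must match, on the nose, the element obtained by applying the laxity of $\F$ and then $\sigma$ against the element obtained by applying $\sigma$ to each factor and then the laxity of $\G$. This equality is the defining coherence of $\sigma$ as an icon, and the only real care needed is to keep the associativity and symmetry isomorphisms of $\M$ under control so that the two composites agree strictly rather than merely up to coherent isomorphism. Once this is in place, $\El(\Id)=\Id$ and compatibility of $\El$ with composition of pairs $(p,\sigma)$ follow by unwinding the definitions, completing the proof that $\El$ is a strict $2$-functor.
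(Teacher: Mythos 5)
Your proof is correct and supplies exactly the verification the paper has in mind: the paper gives no argument of its own (the proposition is prefaced by ``we leave the reader to check''), and your route --- naturality and coherence of the laxity maps of $\F$ for bifunctoriality, associativity and units in $\El(\F)$; the icon condition on $\sigma$ for strict preservation of composite elements by $\El(p,\sigma)$; and the hexagon of Definition \ref{def-nerve-trans} precomposed with an element $\xi: I \to \F(k)$ for the $2$-naturality of $\El(\eta)$, with the composite of nerve-transformations matching the composition of elements in $\El(\G)$ on the nose --- is the intended direct check. The only steps you leave implicit, namely the $2$-cell part of $2$-naturality of $\El(\eta)$ (immediate from the $2$-cell condition of the strict transformation $\alpha$) and preservation of identity $1$-cells by $\El(p,\sigma)$ (the unit half of the icon condition), are routine and of the same kind as what you verified.
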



\subsection{The $2$-category $[\F,\G]$}
We define previously a $1$-category $nT(\F,\G)$ of lax-transformations and nerve-transformations. But there is a natural notion of $2$-morphisms in $nT(\F,\G)$.
\begin{df}
Let $\eta=(\alpha,\eta)$ and $\eta'=(\alpha', \eta')$ be two parallel morphisms from $\sigma_1 \to \sigma_2$.\ \\
A morphism $\Gamma: \eta \to \eta'$ is a \textbf{modification} $\Gamma: \alpha \to \alpha'$ such that for every object $c \in \C$ the following diagram of $2$-morphisms commute:
 
 \[
\xy
(-30,10)*+{\Id_{\ast}}="A";
(0,18)*+{\G \alpha_c}="W";
(0,0)*+{\G \alpha'_c}="X";
{\ar@{->}^-{\eta_c}"A";"W"};
{\ar@{->}^-{\eta'_c}"A";"X"};
{\ar@{->}^-{\G \Gamma_c}"W";"X"};
\endxy
\]
\end{df}

\begin{prop}
The category $nT(\F,\G)$ equipped with the morphism of nerve transformation is a \ul{strict} $2$-category. We will write $[\F,\G]$ this $2$-category.
\end{prop}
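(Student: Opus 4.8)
The plan is to promote the $1$-category $nT(\F,\G)$ built in the previous subsection to a strict $2$-category by supplying vertical and horizontal composition of modifications and checking all the axioms \emph{on the nose}. The guiding principle I would use is that every cell splits into a ``geometric'' part --- the strict $2$-functors $p_i$, the strict transformations $\alpha$, and the modifications $\Gamma$ --- and an $\M$-part, namely the family $\{\eta_c\}$ of $2$-morphisms together with the hexagon axiom of Definition \ref{def-nerve-trans}. The geometric parts are exactly the objects, $1$-cells and $2$-cells of the strict $2$-category $[\C,\D]$ of strict $2$-functors, strict $2$-natural transformations and modifications, which is strict by standard $2$-category theory. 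Hence the shape of every composition law, and its strict associativity and unit axioms, can be \emph{inherited} from $[\C,\D]$; the only new thing to verify at each step is that the accompanying $\M$-families are well defined and still obey the axiom.

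First I would define \textbf{vertical composition} of modifications. Given parallel nerve-transformations $\eta,\eta',\eta''\colon\sigma_1\to\sigma_2$ and modifications $\Gamma\colon\eta\to\eta'$, $\Gamma'\colon\eta'\to\eta''$, I set the underlying modification to be the vertical composite $\Gamma'\circ\Gamma\colon\alpha\to\alpha''$ in $[\C,\D]$. The triangle for $\Gamma$ reads $\eta'_c=\G(\Gamma_c)\circ\eta_c$ and that for $\Gamma'$ reads $\eta''_c=\G(\Gamma'_c)\circ\eta'_c$; composing and using that each hom-functor $\G_{AB}$ preserves composition of $2$-morphisms, so that $\G(\Gamma'_c)\circ\G(\Gamma_c)=\G(\Gamma'_c\circ\Gamma_c)$, yields precisely the triangle for $\Gamma'\circ\Gamma$. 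Associativity and the unit laws for this composition, and the identity modification $\Id_\eta$ (underlying $\Id_\alpha$, trivial triangle), are immediate from the corresponding facts in $[\C,\D]$. This makes each $[\F,\G](\sigma_1,\sigma_2)$ a category.

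Next I would define \textbf{horizontal composition} of modifications and check the interchange law, which I expect to be the main obstacle since it is where the laxity of $\G$ genuinely intervenes. Given $\Gamma\colon\eta_1\to\eta_2$ (between $\sigma_1\to\sigma_2$) and $\Gamma'\colon\eta'_1\to\eta'_2$ (between $\sigma_2\to\sigma_3$), I take the underlying cell to be the composite modification $\Gamma'\otimes\Gamma\colon\alpha'_1\otimes\alpha_1\to\alpha'_2\otimes\alpha_2$ coming from the $2$-category structure of $[\C,\D]$, and I must verify the triangle axiom relating $(\eta'_2\otimes\eta_2)_c$ to $(\eta'_1\otimes\eta_1)_c$, where each component is the laxity-mediated map $I\cong I\otimes I\xrightarrow{\eta'_c\otimes\eta_c}\G\alpha'_c\otimes\G\alpha_c\xrightarrow{\tx{laxity}}\G[(\alpha'\otimes\alpha)_c]$ from the composition subsection. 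The verification amounts to tensoring the two triangle identities and pushing the result through the laxity of $\G$, using naturality of the laxity constraint against $\G(\Gamma_c)\otimes\G(\Gamma'_c)$ and the factorisation $\G(\Gamma'_c)\otimes\G(\Gamma_c)=\G[(\Gamma'\otimes\Gamma)_c]$; the functoriality statement making horizontal composition a map $[\F,\G](\sigma_1,\sigma_2)\times[\F,\G](\sigma_2,\sigma_3)\to[\F,\G](\sigma_1,\sigma_3)$ is then the same computation carried out for composites, its key input being the associativity coherence of $\G$ recorded just above (the commuting square on $\G s\otimes\G t\otimes\G u$).

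Finally, the strict associativity and unit laws for horizontal composition, at the $1$-cell level (already granted when $nT(\F,\G)$ was shown to be a category) and at the $2$-cell level, reduce to strict associativity and unitality in $[\C,\D]$ together with the very same coherence of the laxity of $\G$, whose diagrams commute on the nose; the structural isomorphisms $I\cong I\otimes I$ and the symmetry of $\otimes$ are absorbed into the components exactly as in the $1$-categorical case already treated, so no associators survive. Assembling the hom-categories, the functorial horizontal composition, and these strict constraints gives the strict $2$-category $[\F,\G]$, which completes the proof.
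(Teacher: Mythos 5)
The paper offers no argument here at all --- its proof reads ``Elementary. Left to the reader.'' --- so there is no authorial route to compare against; your write-up correctly supplies the omitted verification and is consistent with the surrounding text (the $1$-categorical structure and identity from the composition subsection, the coherence square for $\G$, and the strict $2$-category $\dfunc(\C,\D)$, which is what you call $[\C,\D]$). Two remarks. First, your proof can be streamlined by noticing that a $2$-cell of $[\F,\G]$ is a modification satisfying a \emph{property} (the triangle), with no accompanying $\M$-data; hence equality of $2$-cells is detected by the underlying modifications (the forgetful $2$-functor $\Ub$ of the following section is injective on $2$-cells), and once you have checked \emph{closure} of the property under vertical composition (local functoriality of $\G$, as you do) and under horizontal composition (naturality of the laxity constraint, as you do), the interchange law and all associativity and unit axioms at the $2$-cell level are inherited verbatim from $\dfunc(\C,\D)$. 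In particular, contrary to what you suggest, interchange does not require the associativity coherence of $\G$: that coherence is needed only for strict associativity of $1$-cell composition, which was already granted when $nT(\F,\G)$ was shown to be a category. Second, your displayed identity $\G(\Gamma'_c)\otimes\G(\Gamma_c)=\G[(\Gamma'\otimes\Gamma)_c]$ is not literally an equality --- the two sides have different sources and targets; what you need, and in effect use via the naturality you invoke in the same sentence, is the commuting square saying the laxity map $\G\alpha'_c\otimes\G\alpha_c\to\G[\alpha'_c\otimes\alpha_c]$ intertwines $\G(\Gamma'_c)\otimes\G(\Gamma_c)$ with $\G(\Gamma'_c\otimes\Gamma_c)$. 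With that restatement your verification of the triangle for $\Gamma'\otimes\Gamma$, and hence the whole proof, is sound.
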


\begin{proof}
Elementary. Left to the reader.
\end{proof}
\section{The classifying $2$-category}
Recall that there is a $2$-category $\dfunc(\C, \D)$ consisting of $2$-functors, strict transformations and modifications. If $\F: \C \to \M$ and $\G: \D \to \M$ are as above, there is a canonical $2$-functor
\begin{align*}
\Ub: [\F, \G] \to \dfunc(\C,\D) \\
\intertext{given by}
\sigma=(p,\sigma) \mapsto p \\
\eta=(\alpha, \eta: \sigma \to \sigma')  \mapsto \alpha \\
(\eta \xrightarrow{\Gamma} \eta') \mapsto  (\alpha \xrightarrow{|\Gamma|} \alpha')
\end{align*}

With this functor, we will say that a nerve transformation $\eta: \sigma \to \sigma'$ \textbf{is over} $\alpha$ if $\Ub \eta =\alpha$.\ \\
The next ingredients that we need  are the \textbf{classifying objects} for nerve transformations over $\alpha$.

\begin{df}
Let $\sigma_1=(p_1,\sigma_1)$, $\sigma_2=(p_2, \sigma_2)$ be two parallel transformations from $\F$ to $\G$, and $\alpha: p_1 \to p_2$ be a transformation in $\dfunc(\C,\D)$. \ \\

Define the \textbf{classifying object} for nerve transformations from $\sigma_1$ to $\sigma_2$ over $\alpha$, to be the equalizer 
$$\int_{\sigma_1}^{\sigma_2} \G \alpha  \to  \prod_{c \in \Ob(\C)} \G \alpha_c \rightrightarrows \prod_{c \xrightarrow{f} d} \frac{\HOM_{\M}(\F(f), \G[\alpha_d \otimes p_1(f)])}{ \HOM_{\M}(\F (f), \G[p_2(f) \otimes \alpha_c])}$$
where the parallel arrows are  the adjoint transpose (from the adjunction of $\HOM$ and $\otimes$ in $\M$) of: 
$$ \G \alpha_d \otimes \F(f) \xrightarrow{\Id \otimes \sigma_1} \G \alpha_d \otimes \G p_1(f) \to  \G[\alpha_d \otimes p_1(f)] $$
$$\F(f) \otimes \G \alpha_c \xrightarrow{ \sigma_2 \otimes \Id} \G p_2(f) \otimes \G \alpha_c \to \G[p_2(f) \otimes \alpha_c].  $$  
\end{df} 

\begin{rmk}\label{rmk_eval}
It's important to observe that by definition for any $f: c \to d$ and $\alpha: p_1 \to p_2$ as above, there is a \textbf{canonical evaluation} map:
$$(\int_{\sigma_1}^{\sigma_2} \G \alpha) \otimes \F(f) \to  \frac{\G[\alpha_d \otimes p_1(f)] }{\G[p_2(f) \otimes \alpha_c]}.$$

That map is given by composing, respectively 
$$(\int_{\sigma_1}^{\sigma_2} \G \alpha) \otimes \F(f) \xrightarrow{pr_d \otimes \Id} \G \alpha_d \otimes \F(f)$$ 
and 
$$(\int_{\sigma_1}^{\sigma_2} \G \alpha) \otimes \F(f) \xrightarrow[\cong]{\tx{symmetry}} \F(f) \otimes (\int_{\sigma_1}^{\sigma_2} \G \alpha) \xrightarrow{\Id \otimes pr_c}\F(f) \otimes \G \alpha_c$$
with the adjoint transpose map in the definition.
\end{rmk}

\begin{prop}\label{prop-funct-integ}
 If $\Gamma: \alpha \to \alpha'$ is a modification then:
\begin{enumerate}
\item there is a canonical map in $\M$:
$$\int_{\sigma_1}^{\sigma_2} \G \Gamma : \int_{\sigma_1}^{\sigma_2} \G \alpha  \to \int_{\sigma_1}^{\sigma_2} \G \alpha' .$$
\item the morphism $ \int_{\sigma_1}^{\sigma_2} \G \Gamma$ distribute over the vertical composition of modifications i.e, the assignment 
$$ \alpha \mapsto \int_{\sigma_1}^{\sigma_2} \G \alpha $$
is functorial in $\alpha$.  

\end{enumerate}
 \end{prop}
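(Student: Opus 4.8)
The plan is to construct the map $\int_{\sigma_1}^{\sigma_2}\G\Gamma$ by the universal property of the equalizer defining the classifying object, by exhibiting a modification $\Gamma$ as inducing a morphism between the two parallel-pair diagrams (the one for $\alpha$ and the one for $\alpha'$). First I would assemble the evident candidate on the products: a modification $\Gamma\colon\alpha\to\alpha'$ supplies, for each $c\in\Ob(\C)$, a $2$-morphism $\Gamma_c\colon\alpha_c\to\alpha'_c$ of $\D$, hence a morphism $\G(\Gamma_c)\colon\G\alpha_c\to\G\alpha'_c$ of $\ul M$, and taking the product over $c$ gives a map $\prod_{c}\G(\Gamma_c)\colon\prod_{c}\G\alpha_c\to\prod_{c}\G\alpha'_c$. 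Symmetrically, for each $1$-morphism $f\colon c\to d$ the modification axiom asserts that the two whiskered $2$-morphisms $\Gamma_d\otimes\Id_{p_1(f)}$ and $\Id_{p_2(f)}\otimes\Gamma_c$ coincide; write $\Gamma_{(f)}$ for this common $2$-morphism $\alpha_d\otimes p_1(f)\to\alpha'_d\otimes p_1(f)$ (equivalently $p_2(f)\otimes\alpha_c\to p_2(f)\otimes\alpha'_c$, using the strict $2$-naturality of $\alpha$ and $\alpha'$). Then $\HOM(\F(f),\G\Gamma_{(f)})$ assembles into a map on the target products indexed by $f$.

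The crux will be to check that these two data form a map of parallel-pair diagrams, i.e. that the two squares built from the arrows $\rightrightarrows$ commute. I would verify this at the level of the pre-adjoint maps out of $\G\alpha_d\otimes\F(f)$ (resp. $\F(f)\otimes\G\alpha_c$); since the tensor–hom adjunction of $\M$ is natural, commutation before transposing yields commutation of the transposed squares. For the first arrow I use that the laxity constraint of $\G$ is natural in each variable: applied to $\Gamma_d$ and $\Id_{p_1(f)}$ it gives $\G(\Gamma_{(f)})\circ\mathrm{lax}_{\alpha_d,p_1(f)}=\mathrm{lax}_{\alpha'_d,p_1(f)}\circ(\G\Gamma_d\otimes\Id_{\G p_1(f)})$, using $\G(\Id_{p_1(f)})=\Id_{\G p_1(f)}$. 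Combining this with the bifunctoriality of $\otimes$, both the down-then-right and the right-then-down composites $\G\alpha_d\otimes\F(f)\to\G[\alpha'_d\otimes p_1(f)]$ reduce to $\mathrm{lax}_{\alpha'_d,p_1(f)}\circ(\G\Gamma_d\otimes\sigma_1)$, so the first square commutes. The second square is identical after replacing the pair $(\Gamma_d,\Id_{p_1(f)})$ by $(\Id_{p_2(f)},\Gamma_c)$ and invoking naturality of the laxity in the other slot. Here it is essential that $\Gamma_{(f)}$ be the \emph{same} $2$-morphism in both computations, which is exactly the content of the modification axiom. I expect this compatibility — tying the naturality of the laxity constraints of $\G$ to the modification axiom — to be the main obstacle; everything else is formal. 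Once both squares commute, the universal property of the equalizer produces a unique map $\int_{\sigma_1}^{\sigma_2}\G\Gamma$ whose composite with the equalizer inclusion agrees with $\prod_c\G(\Gamma_c)$, establishing part (1).

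Part (2) is then a short formal argument. Vertical composition of modifications is computed objectwise, $(\Gamma'\circ\Gamma)_c=\Gamma'_c\circ\Gamma_c$, and the $2$-dimensional part of the lax functor $\G$ is an ordinary functor on each hom-category, so $\G((\Gamma'\circ\Gamma)_c)=\G(\Gamma'_c)\circ\G(\Gamma_c)$ and $\G((\Id_\alpha)_c)=\Id_{\G\alpha_c}$. Hence $\prod_c\G((\Gamma'\circ\Gamma)_c)=\big(\prod_c\G\Gamma'_c\big)\circ\big(\prod_c\G\Gamma_c\big)$ on the products, and $\prod_c\G((\Id_\alpha)_c)=\Id$. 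Because the equalizer inclusion is a monomorphism, the induced maps on the classifying objects are uniquely determined by their composite with that inclusion; therefore $\int_{\sigma_1}^{\sigma_2}\G(\Gamma'\circ\Gamma)=\big(\int_{\sigma_1}^{\sigma_2}\G\Gamma'\big)\circ\big(\int_{\sigma_1}^{\sigma_2}\G\Gamma\big)$ and $\int_{\sigma_1}^{\sigma_2}\G(\Id_\alpha)=\Id$. This exhibits $\alpha\mapsto\int_{\sigma_1}^{\sigma_2}\G\alpha$ as functorial in $\alpha$, completing the proof.
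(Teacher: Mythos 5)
Your proof is correct and takes essentially the same approach as the paper: both induce $\int_{\sigma_1}^{\sigma_2}\G\Gamma$ from the componentwise morphisms $\G\Gamma_c$ via the universal property of the equalizer, with the key compatibility checked at the pre-transposed ($\otimes$) level using the modification identity $\Gamma_d \otimes p_1(f) = p_2(f)\otimes \Gamma_c$ (hence $\G[\Gamma_d\otimes p_1(f)]=\G[p_2(f)\otimes\Gamma_c]$) together with the naturality of the laxity constraints of $\G$. The only cosmetic difference is that you package the verification as a morphism of parallel-pair diagrams, whereas the paper precomposes the two naturality squares with the projections out of $\int_{\sigma_1}^{\sigma_2}\G\alpha$; your explicit monomorphism argument for part (2), which the paper leaves to the reader, is also correct.
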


\begin{proof}
Assertion $(2)$ is left to the reader. We will only give a proof of Assertion $(1)$.  \\

Our desired map will follow by universal property of equalizers, if we show that  for any $f: c \to d$, the following paths give the same map:
$$\int_{\sigma_1}^{\sigma_2} \G \alpha \otimes \F(f) \xrightarrow{pr_d \otimes \Id} \G \alpha_d \otimes \F(f) \xrightarrow{\G \Gamma_d \otimes \Id} \G \alpha'_d \otimes \F(f) \to \frac{\G[\alpha'_d \otimes p_1(f)] }{\G[p_2(f) \otimes \alpha'_c]}$$

$$\int_{\sigma_1}^{\sigma_2} \G \alpha \otimes \F(f) \xrightarrow[\cong]{\tx{sym}} \F(f) \otimes \int_{\sigma_1}^{\sigma_2} \G \alpha \xrightarrow{\Id \otimes pr_c } \F(f) \otimes \G \alpha_c  \xrightarrow{ \Id \otimes \G \Gamma_c } \F(f) \otimes \G \alpha'_c   \to \frac{\G[\alpha'_d \otimes p_1(f)] }{\G[p_2(f) \otimes \alpha'_c]}.$$

Since $\Gamma: \alpha \to \alpha'$ is a modification, one has by definition an equality of $2$-morphisms
$$ p_2(f) \otimes \Gamma_c= \Gamma_d \otimes p_1(f)$$
and therefore after applying  $\G$, we get an equality
$$  \G[p_2(f) \otimes \Gamma_c]=  \G[\Gamma_d \otimes p_1(f)].$$

Now thanks to the functoriality of the laxity maps of $\G$ we get two commutative diagrams:
\[
\xy
(-10,18)*+{\G \alpha_d \otimes \F(f)}="W";
(-10,0)*+{\G \alpha'_d \otimes \F(f)}="X";
(20,0)*+{\G \alpha'_d \otimes \G p_1(f)}="Y";
(20,18)*+{\G \alpha_d \otimes \G p_1(f)}="E";
(50,18)*+{\G [\alpha_d \otimes p_1(f)]}="B";
(50,0)*+{\G [\alpha'_d \otimes  p_1(f)]}="C";
{\ar@{->}^-{}"X";"Y"};
{\ar@{->}_-{\G(\Gamma_d) \otimes \Id}"W";"X"};
{\ar@{->}^-{}"W";"E"};
{\ar@{->}^-{}"E";"B"};
{\ar@{->}^-{}"Y";"C"};
{\ar@{->}^-{\G[\Gamma_d \otimes p_1(f)]}"B";"C"};
\endxy
\]
\[
\xy
(-10,18)*+{ \F(f) \otimes \G \alpha_c }="W";
(-10,0)*+{\F(f) \otimes \G \alpha'_c}="X";
(20,0)*+{\G p_2(f) \otimes \G \alpha'_c}="Y";
(20,18)*+{\G p_2(f) \otimes \G \alpha_c}="E";
(50,18)*+{\G [p_2(f)\otimes \alpha_c ]}="B";
(50,0)*+{\G [p_2(f)\otimes \alpha'_c]}="C";
{\ar@{->}^-{}"X";"Y"};
{\ar@{->}_-{\Id \otimes \G(\Gamma_c) }"W";"X"};
{\ar@{->}^-{}"W";"E"};
{\ar@{->}^-{}"E";"B"};
{\ar@{->}^-{}"Y";"C"};
{\ar@{->}^-{\G[ p_2(f) \otimes \Gamma_c]}"B";"C"};
\endxy
\]
\ \\

If we precompose these diagrams respectively by 
$$\int_{\sigma_1}^{\sigma_2} \G \alpha \otimes \F(f) \xrightarrow{pr_d \otimes \Id} \G \alpha_d \otimes \F(f)$$ and  $$\int_{\sigma_1}^{\sigma_2} \G \alpha \otimes \F(f) \xrightarrow[\cong]{\tx{sym}} \F(f) \otimes \int_{\sigma_1}^{\sigma_2} \G \alpha  \xrightarrow{\Id \otimes pr_c } \F(f) \otimes \G \alpha_c $$
 we get two new commutative squares sharing the \ul{same} bottom horizontal map; this is because by definition, $\int_{\sigma_1}^{\sigma_2} \G \alpha$ is some equalizer.\ \\ 
 
 As these squares commute it's easy to see that the two maps we wanted to compare are in fact the same and the assertion follows. 
\end{proof}

\begin{pdef}
The following data define a strict $2$-category $B(\F, \G)$
\begin{itemize}[label=$-$]
\item the objects are all transformations $\sigma=(p, \sigma)$;
\item a $1$-morphism from $\sigma_1=(p_1, \sigma_1)$ to $\sigma_2=(p_2, \sigma_2)$ is a pair 
$$(\alpha, \int_{\sigma_1}^{\sigma_2} \G \alpha )$$ where
where $\alpha: p_1 \to p_2$ is a strict transformation and $\int_{\sigma_1}^{\sigma_2} \G $ is the obvious classifying object;
\item  a $2$-morphism  from $(\alpha, \int_{\sigma_1}^{\sigma_2} \G \alpha )$ to $(\alpha', \int_{\sigma_1}^{\sigma_2} \G \alpha' )$ is a pair
$$(\Gamma, \int_{\sigma_1}^{\sigma_2} \G \Gamma)  $$
where $\Gamma: \alpha \to \alpha'$ is a modification and 
$\int_{\sigma_1}^{\sigma_2} \G \Gamma: \int_{\sigma_1}^{\sigma_2} \G \alpha  \to \int_{\sigma_1}^{\sigma_2} \G \alpha' $ is the universal map; 
\item the composition functor  $$B(\F, \G)(\sigma_1, \sigma_2) \times B(\F,\G)(\sigma_2, \sigma_3) \to B(\F, \G)(\sigma_1, \sigma_3) $$ 
sends $$[(\alpha, \int_{\sigma_1}^{\sigma_2} \G \alpha ), (\beta, \int_{\sigma_2}^{\sigma_3} \G \beta)] \mapsto (\beta \otimes \alpha, \int_{\sigma_1}^{\sigma_3} \G (\beta \otimes \alpha))$$ 
and similarly 
$$[(\Gamma, \int_{\sigma_1}^{\sigma_2} \G \Gamma ), (\Gamma', \int_{\sigma_2}^{\sigma_3} \G \Gamma')] \mapsto (\Gamma'  \otimes \Gamma, \int_{\sigma_1}^{\sigma_3} \G (\Gamma' \otimes \Gamma)).$$ 
\end{itemize}

The $2$-category $B(\F,\G)$ will be called \textbf{the classifying $2$-category of morphisms from $\F$ to $\G$}.
\end{pdef}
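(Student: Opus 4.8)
The plan is to reduce every axiom to the universal property of the equalizers $\int_{\sigma_1}^{\sigma_2}\G\alpha$ together with the coherence (associativity and unitality of the laxity maps) of the lax functor $\G$. The only genuinely new ingredient beyond Proposition \ref{prop-funct-integ} is the construction of a horizontal composition map at the level of classifying objects; everything else is bookkeeping that can be tested after projecting to components.

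First I would dispatch the hom-categories. Fix $\sigma_1,\sigma_2$. The objects of $B(\F,\G)(\sigma_1,\sigma_2)$ are the pairs $(\alpha,\int_{\sigma_1}^{\sigma_2}\G\alpha)$ and its morphisms the pairs $(\Gamma,\int_{\sigma_1}^{\sigma_2}\G\Gamma)$. Vertical composition uses the composition of modifications in the first coordinate, and in the second coordinate Proposition \ref{prop-funct-integ}(2) guarantees $\int_{\sigma_1}^{\sigma_2}\G(\Gamma'\circ\Gamma)=\int_{\sigma_1}^{\sigma_2}\G\Gamma'\circ\int_{\sigma_1}^{\sigma_2}\G\Gamma$, with the identity modification sent to the identity map. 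Associativity and the unit laws are inherited, so each $B(\F,\G)(\sigma_1,\sigma_2)$ is a category.

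The heart of the argument is the horizontal composition functor on $1$-cells. For $\alpha:p_1\to p_2$ and $\beta:p_2\to p_3$ I would construct a canonical map
$$\mu:\ \int_{\sigma_1}^{\sigma_2}\G\alpha \otimes \int_{\sigma_2}^{\sigma_3}\G\beta\ \to\ \int_{\sigma_1}^{\sigma_3}\G(\beta\otimes\alpha)$$
by invoking the universal property of the target equalizer: one supplies the map into $\prod_{c}\G[(\beta\otimes\alpha)_c]$ whose $c$-component is
$$\int_{\sigma_1}^{\sigma_2}\G\alpha \otimes \int_{\sigma_2}^{\sigma_3}\G\beta \xrightarrow{pr_c\otimes pr_c} \G\alpha_c\otimes\G\beta_c \xrightarrow[\cong]{\tx{sym}} \G\beta_c\otimes\G\alpha_c \xrightarrow{\tx{laxity}} \G[\beta_c\otimes\alpha_c]$$
and then checks that it equalizes the two parallel arrows of the definition. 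This verification is exactly the internal analogue of the hexagon-gluing performed for nerve-transformations earlier: one feeds the map into $\F(f)$ and uses the \textbf{canonical evaluation} maps of Remark \ref{rmk_eval} for each of the two factors, together with the associativity coherence of $\G$ (the commuting triangle for three composable $1$-morphisms), to identify the two paths. This is the step I expect to be the main obstacle, since it is the only place where a coherence must be reproved by hand rather than quoted.

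With $\mu$ in hand the remaining axioms are routine. Functoriality of composition in the $2$-cells, hence the interchange law for a strict $2$-category, amounts to the commutativity of a square relating $\mu$ to the horizontal composite modification $\Delta\otimes\Gamma$; since both target corners are equalizers it suffices to test after projecting to each $\G[(\beta'\otimes\alpha')_c]$, where it reduces to the functoriality of the laxity maps already exploited in Proposition \ref{prop-funct-integ}. Associativity of $1$-cell composition holds strictly on the transformation coordinate and, on the classifying-object coordinate, follows by comparing the two induced maps out of $\int_{\sigma_1}^{\sigma_2}\G\alpha\otimes\int_{\sigma_2}^{\sigma_3}\G\beta\otimes\int_{\sigma_3}^{\sigma_4}\G\gamma$ after projection, again by associativity of the laxity of $\G$. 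Finally the unit $1$-cell on $\sigma=(p,\sigma)$ is $(\Id_p,\int_{\sigma}^{\sigma}\G\Id_p)$, carrying the canonical element that classifies the identity nerve-transformation $(\Id_p,\{\Id_I\})$; the left and right unit laws then drop out of the unitality coherence of $\G$ and the unit isomorphisms of $\M$. Assembling these verifications exhibits $B(\F,\G)$ as a strict $2$-category.
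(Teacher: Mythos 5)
Your verifications are all true statements and they do establish the Proposition-Definition, but you have relocated the paper's hard content into a statement the paper treats as purely formal (its proof there is literally ``Exercise''). The organizational point you miss is that in $B(\F,\G)$ the second coordinate of every cell is \emph{determined} by the first: the composite of $(\alpha,\int_{\sigma_1}^{\sigma_2}\G\alpha)$ and $(\beta,\int_{\sigma_2}^{\sigma_3}\G\beta)$ is by definition the pair whose second entry is the classifying object $\int_{\sigma_1}^{\sigma_3}\G(\beta\otimes\alpha)$ itself, not anything manufactured from a map out of $\int_{\sigma_2}^{\sigma_3}\G\beta\otimes\int_{\sigma_1}^{\sigma_2}\G\alpha$. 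Consequently the $2$-category axioms reduce to the corresponding axioms for strict transformations and modifications in $\dfunc(\C,\D)$ (first coordinate) together with Proposition \ref{prop-funct-integ}(2) (vertical functoriality of $\Gamma\mapsto\int_{\sigma_1}^{\sigma_2}\G\Gamma$) --- exactly the bookkeeping you dispatch in your opening paragraphs; in particular the interchange law does not ``amount to'' your square relating $\mu$ to $\int_{\sigma_1}^{\sigma_3}\G(\Gamma'\otimes\Gamma)$, since it already follows from interchange of modifications plus canonicity of the universal maps. Your map $\mu$, its associativity diamond, and its naturality square are genuine and are indeed proved by the equalizer-plus-evaluation-plus-coherence technique you sketch, but in the paper they constitute Lemma \ref{canon-lax-lem}, whose role is to make $\HOM(\F,\G):B(\F,\G)\to\M$ a \emph{lax functor} (Corollary \ref{cor-canon-lax}), not to make $B(\F,\G)$ a $2$-category. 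So your route proves strictly more than the statement --- in effect the Proposition-Definition together with Lemma \ref{canon-lax-lem}(1) and (3) at once --- which would render Corollary \ref{cor-canon-lax} nearly immediate, at the cost of obscuring that the $2$-category structure itself is formal (which is precisely what makes the forgetful $2$-functor $\pi$ of Proposition \ref{prop-canon-funct} obvious). One small flag: your $\mu$ has its tensor factors in the opposite order from the paper's $(\int_{\sigma_2}^{\sigma_3}\G\beta)\otimes(\int_{\sigma_1}^{\sigma_2}\G\alpha)$; this is harmless given the symmetry of $\M$, but the convention should be fixed consistently with the evaluation maps of Remark \ref{rmk_eval}.
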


\begin{proof}
Exercise.
\end{proof}
The choice of the letter `$B$' comes from the classifying space notation of homotopy theorists.
\paragraph{A forgetful functor} All the data in the definition of  $B(\F,\G)$ consist of `formal' pairs of some objects. The reader can check that the projection on the first factor of each of these data gives:
\begin{prop}\label{prop-canon-funct}
There is a canonical `forgetful'  strict $2$-functor 
$$ \pi: B(\F,\G) \to \dfunc(\C,\D).$$
This $2$-functor is locally a Grothendieck op-fibration.
\end{prop}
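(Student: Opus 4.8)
The plan is to treat $\pi$ as literally the projection onto the first factor of the ``formal pairs'' out of which $B(\F,\G)$ was built, and to observe that the second factor of each datum is not independent information but is completely determined by the first. Concretely, I would first record that $\pi$ is a \emph{strict} $2$-functor: on objects it sends $\sigma=(p,\sigma)$ to $p$, on $1$-morphisms it sends $(\alpha,\int_{\sigma_1}^{\sigma_2}\G\alpha)$ to $\alpha$, and on $2$-morphisms it sends $(\Gamma,\int_{\sigma_1}^{\sigma_2}\G\Gamma)$ to $\Gamma$. Since the composition and the identities in $B(\F,\G)$ were \emph{defined} by composing the first components ($\beta\otimes\alpha$ for $1$-morphisms and $\Gamma'\otimes\Gamma$ for $2$-morphisms, $\Id_p$ for units) and then taking the \emph{induced} universal maps on the $\int$-components, the projection preserves all of these strictly. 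Hence there is no coherence to check, and $\pi$ is a strict $2$-functor; this is exactly the ``projection on the first factor'' alluded to before the statement.

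The substance is the local claim, so next I would fix two objects $\sigma_1=(p_1,\sigma_1)$ and $\sigma_2=(p_2,\sigma_2)$ and analyse the induced functor on hom-categories
\[
\pi_{\sigma_1,\sigma_2}\colon B(\F,\G)(\sigma_1,\sigma_2)\longrightarrow \dfunc(\C,\D)(p_1,p_2).
\]
The key observation --- and the heart of the argument --- is that the assignment $\alpha\mapsto \int_{\sigma_1}^{\sigma_2}\G\alpha$ is a genuine \emph{function} of $\alpha$ (the classifying object is the chosen equalizer attached to $\alpha$, which exists because $\M$ is complete), and that by Proposition \ref{prop-funct-integ} a modification $\Gamma\colon\alpha\to\alpha'$ determines a \emph{unique} comparison map $\int_{\sigma_1}^{\sigma_2}\G\Gamma$, functorially in $\Gamma$. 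Thus over each object $\alpha$ of $\dfunc(\C,\D)(p_1,p_2)$ there is exactly one object of $B(\F,\G)(\sigma_1,\sigma_2)$, and over each morphism $\Gamma$ exactly one morphism; in other words the fibres of $\pi_{\sigma_1,\sigma_2}$ are singletons.

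From this I would conclude immediately. Bijectivity on objects and on morphisms, together with the functoriality already established, shows that $\pi_{\sigma_1,\sigma_2}$ is in fact an \emph{isomorphism} of categories, and an isomorphism is a fortiori a (discrete) Grothendieck op-fibration: every morphism is coCartesian and lifts are unique, so the coCartesian lift of a modification $\Gamma\colon\alpha\to\alpha'$ at the object $(\alpha,\int_{\sigma_1}^{\sigma_2}\G\alpha)$ is simply $(\Gamma,\int_{\sigma_1}^{\sigma_2}\G\Gamma)$. Since this holds for every pair $\sigma_1,\sigma_2$, the functor $\pi$ is locally a Grothendieck op-fibration, as claimed.

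I do not expect a genuine obstacle here: unlike the $\El(\F)$ situation, where the hom-fibres are the possibly large sets $\Hom(I,\F(k))$, the fibres here are singletons because $B(\F,\G)$ keeps the \emph{object} $\int_{\sigma_1}^{\sigma_2}\G\alpha$ rather than its set of elements. The only points requiring care are bookkeeping: verifying that $\alpha\mapsto\int_{\sigma_1}^{\sigma_2}\G\alpha$ and $\Gamma\mapsto\int_{\sigma_1}^{\sigma_2}\G\Gamma$ really is a functor, so that the singleton fibres assemble coherently --- but this is precisely the content of Proposition \ref{prop-funct-integ}(2) --- and confirming that the composition in $B(\F,\G)$ projects onto the composition in $\dfunc(\C,\D)$, which is immediate from the way the former was defined.
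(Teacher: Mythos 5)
Your proposal is correct and takes essentially the same route the paper intends: the paper's entire proof is the remark that all data of $B(\F,\G)$ are formal pairs and that projection on the first factor does the job, and you have simply written out those verifications, with the functoriality of $\alpha \mapsto \int_{\sigma_1}^{\sigma_2} \G \alpha$ supplied, as you note, by Proposition \ref{prop-funct-integ}. Your further observation that each local functor $\pi_{\sigma_1,\sigma_2}$ is in fact an \emph{isomorphism} of categories --- since the second component of every pair is uniquely determined by the first, unlike the genuinely nontrivial fibres of $\El(\F) \to \C$ --- is accurate and renders the op-fibration claim immediate.
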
 

\section{The canonical lax functor}
In what follows we want to establish that there is a canonical lax functor from the $2$-category $B(\F,\G)$ to $\M$. The key ingredient is the following lemma.
\begin{lem}\label{canon-lax-lem}
 If $\alpha: p_1 \to p_2$ and $\beta: p_2 \to p_3$ are two composable strict transformations, then the following holds.
\begin{enumerate}
\item there is a canonical map in $\M$:
$$ (\int_{\sigma_2}^{\sigma_3} \G \beta) \otimes ( \int_{\sigma_1}^{\sigma_2} \G \alpha)  \to \int_{\sigma_1}^{\sigma_3} \G (\beta \otimes \alpha).$$
\item these maps satisfy the `associativity' coherence condition i.e, the following commutes 
\[
\xy
(-60,10)*+{(\int_{\sigma_3}^{\sigma_4}\G \gamma)  \otimes (\int_{\sigma_2}^{\sigma_3} \G \beta) \otimes ( \int_{\sigma_1}^{\sigma_2} \G \alpha)}="A";
(0,18)*+{(\int_{\sigma_2}^{\sigma_4} \G \gamma \otimes \beta) \otimes ( \int_{\sigma_1}^{\sigma_2} \G \alpha)}="W";
(0,0)*+{(\int_{\sigma_3}^{\sigma_4} \G \gamma) \otimes ( \int_{\sigma_1}^{\sigma_3} \G \beta \otimes \alpha)}="X";
(50,10)*+{\int_{\sigma_1}^{\sigma_4}\G(\gamma \otimes \beta \otimes \alpha)} ="B";
{\ar@{->}^-{}"A";"W"};
{\ar@{->}^-{}"A";"X"};
{\ar@{->}^-{}"X";"B"};
{\ar@{->}^-{}"W";"B"};
\endxy
\]
\item If $\Gamma$ and $\Gamma'$ are composable modifications, then the following commutes
\[
\xy
(0,18)*+{ (\int_{\sigma_2}^{\sigma_3} \G \beta) \otimes ( \int_{\sigma_1}^{\sigma_2} \G \alpha)}="W";
(0,0)*+{(\int_{\sigma_2}^{\sigma_3} \G \beta) \otimes ( \int_{\sigma_1}^{\sigma_2} \G \alpha)}="X";
(50,18)*+{\int_{\sigma_1}^{\sigma_3} \G (\beta \otimes \alpha)}="B";
(50,0)*+{\int_{\sigma_1}^{\sigma_3} \G (\beta \otimes \alpha)}="C";
{\ar@{->}_-{}"W";"B"};
{\ar@{->}_-{}"X";"C"};
{\ar@{->}_-{(\int_{\sigma_2}^{\sigma_3} \G \Gamma') \otimes ( \int_{\sigma_1}^{\sigma_2} \G \Gamma)}"W";"X"};
{\ar@{->}^-{\int_{\sigma_2}^{\sigma_3} \G (\Gamma' \otimes \Gamma) }"B";"C"};
\endxy
\]
\end{enumerate}
\end{lem}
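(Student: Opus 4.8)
The plan is to construct all three maps by the universal property of the equalizers defining the classifying objects, in close analogy with the proof of Proposition~\ref{prop-funct-integ}. For Assertion~(1), the target $\int_{\sigma_1}^{\sigma_3}\G(\beta\otimes\alpha)$ is an equalizer, so to produce a map into it from the source $(\int_{\sigma_2}^{\sigma_3}\G\beta)\otimes(\int_{\sigma_1}^{\sigma_2}\G\alpha)$ it suffices to build a compatible family of components into $\prod_{c}\G(\beta\otimes\alpha)_c$ and check that the two parallel arrows of the equalizer agree on it. First I would define, for each object $c$, the $c$-component as the composite that projects both tensor factors to their $c$-slots and then applies the laxity map of $\G$:
\begin{equation*}
(\textstyle\int_{\sigma_2}^{\sigma_3}\G\beta)\otimes(\int_{\sigma_1}^{\sigma_2}\G\alpha)\xrightarrow{pr_c\otimes pr_c}\G\beta_c\otimes\G\alpha_c\xrightarrow{\tx{laxity}}\G(\beta_c\otimes\alpha_c)=\G(\beta\otimes\alpha)_c.
\end{equation*}
The content is then to verify the equalizer condition, i.e.\ that the two adjoint-transpose maps of Remark~\ref{rmk_eval} (for the composite transformation $\beta\otimes\alpha$, with $p_1$ and $p_3$) coincide after tensoring with $\F(f)$. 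This is exactly the \emph{glueing of two hexagons} argument already carried out in Section~3.1: the evaluation of the $\int\G\beta$ factor against $\sigma_2$ and the evaluation of the $\int\G\alpha$ factor against $\sigma_1$ combine, via the symmetry of $\otimes$ and the associativity coherence of the laxity of $\G$ (the commuting triangle on three composable $1$-morphisms $s,t,u$ displayed in Section~3.1), into a single evaluation against $\sigma_1$ on one side and against $\sigma_3$ on the other. I would phrase this as the identity `$(\eta''_d\otimes\eta_d)\otimes\sigma_1=\sigma_3\otimes(\eta''_c\otimes\eta_c)$' at the level of the universal evaluation maps, reusing the canonical evaluation of Remark~\ref{rmk_eval} for each factor.

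For Assertion~(2), both composites around the square land in the equalizer $\int_{\sigma_1}^{\sigma_4}\G(\gamma\otimes\beta\otimes\alpha)$, so by its universal property it is enough to check equality after composing with the defining projections, i.e.\ after evaluating against each $\F(f)$. Each route reduces, by construction of the maps in~(1), to iterated applications of the laxity of $\G$ to the three factors $\G\gamma_c,\G\beta_c,\G\alpha_c$; the two bracketings of the triple laxity agree precisely by the associativity coherence of $\G$ (again the three-fold laxity triangle), so the square commutes. Assertion~(3) is the simplest: both vertical arrows are the maps furnished by Proposition~\ref{prop-funct-integ}(1) applied to $\Gamma',\Gamma$ and to $\Gamma'\otimes\Gamma$, and the diagram asserts that the construction of~(1) is compatible with the functoriality maps $\int\G\Gamma$. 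After projecting to components this becomes the naturality of the laxity map of $\G$ with respect to the $2$-morphisms $\G\Gamma'_c$ and $\G\Gamma_c$, which is the same bifunctoriality of $\otimes$ and laxity used in Proposition~\ref{prop-funct-integ}; I would invoke that proposition's functoriality together with the coherence identity $\G[\Gamma'_d\otimes\Gamma_c]=\G[\Gamma'_c]\otimes\G[\Gamma_d]$-type compatibility coming from $\Gamma,\Gamma'$ being modifications.

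The main obstacle I expect is bookkeeping rather than conceptual: correctly matching the two \emph{different} adjoint-transpose arrows of the equalizer (one built from $\sigma_1$ on the `$p_1$' side, one from $\sigma_3$ on the `$p_3$' side) through the intermediate factor $\sigma_2$, so that the middle evaluation against $\sigma_2$ genuinely cancels when the two hexagons are glued. Concretely, the delicate point is that the factor $\int_{\sigma_2}^{\sigma_3}\G\beta$ evaluates against $\sigma_2\otimes(\text{evaluation of the other factor})$ and one must use the symmetry isomorphism of $\M$ to reposition $\F(f)$ so that the laxity-associativity triangle of $\G$ applies to $(\gamma,\beta,\alpha)$-components in the correct order. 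I would handle this exactly as in Section~3.1, carrying out the analogue of its \textbf{Step 1}, \textbf{Step 2} and \textbf{Step 3} (tensor the two governing hexagons on the appropriate side, then glue using the symmetry and the laxity coherence of $\G$), and leave the fully spelled-out diagram chase to the reader, as the paper does for the parallel associativity verification in the definition of $B(\F,\G)$.
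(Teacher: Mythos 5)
Your proposal is correct and follows essentially the same route as the paper: the map in (1) is induced on the equalizer from the components $pr_c\otimes pr_c$ followed by the laxity map of $\G$, the equalizer condition is verified by tensoring the two defining diamonds for $\alpha$ and $\beta$ on the appropriate sides and gluing them via the symmetry of $\otimes$ and the threefold laxity coherence of $\G$ (the paper's Steps 1--4), and (2) and (3) are deduced exactly as you say from the coherence of $\G$ together with the uniqueness of factorizations through the equalizer. The only blemish is the miswritten identity `$\G[\Gamma'_d\otimes\Gamma_c]=\G[\Gamma'_c]\otimes\G[\Gamma_d]$' near the end---what is actually needed there is the modification identity $p_2(f)\otimes\Gamma_c=\Gamma_d\otimes p_1(f)$ together with the naturality of the laxity maps of $\G$ in $2$-morphisms, as in Proposition~\ref{prop-funct-integ}---but this is a slip of notation, not of substance.
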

Before giving the proof of the lemma, we have a direct consequence:
\begin{cor}\label{cor-canon-lax}
The assignment:
\begin{itemize}
\item $\sigma \mapsto \ast$;
\item $(\alpha, \int_{\sigma_1}^{\sigma_2} \G \alpha) \mapsto \int_{\sigma_1}^{\sigma_2} \G \alpha $;

\item  $(\Gamma, \int_{\sigma_1}^{\sigma_2} \G \Gamma_c) \mapsto \int_{\sigma_1}^{\sigma_2} \G \Gamma_c $;
\end{itemize} 
defines a lax functor from  $B(\F,\G)$  to $\M$.
\end{cor}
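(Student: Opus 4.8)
The plan is to deduce all three assertions from the universal property of the equalizer that defines $\int_{\sigma_1}^{\sigma_3}\G(\beta \otimes \alpha)$, in close parallel with the construction of the horizontal composite of nerve-transformations given earlier. First I would build the candidate map of Assertion $(1)$ by specifying, for each object $c \in \Ob(\C)$, the composite
$$(\int_{\sigma_2}^{\sigma_3}\G\beta) \otimes (\int_{\sigma_1}^{\sigma_2}\G\alpha) \xrightarrow{pr_c \otimes pr_c} \G\beta_c \otimes \G\alpha_c \xrightarrow{\tx{laxity}} \G(\beta_c \otimes \alpha_c),$$
noting that $\beta_c \otimes \alpha_c = (\beta \otimes \alpha)_c$ since $\alpha$ and $\beta$ are strict transformations. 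These assemble into a single map into $\prod_{c}\G(\beta\otimes\alpha)_c$, so the content of $(1)$ reduces to checking that this map equalizes the two parallel arrows of the defining diagram.

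For the equalization check I would transpose across the adjunction $\HOM \dashv \otimes$ and use the canonical evaluation maps of Remark \ref{rmk_eval}. After tensoring with $\F(f)$ for a fixed $1$-morphism $f\colon c \to d$, the evaluation descriptions of $\int_{\sigma_2}^{\sigma_3}\G\beta$ and of $\int_{\sigma_1}^{\sigma_2}\G\alpha$ let me rewrite the two composites landing in $\G[(\beta\otimes\alpha)_d \otimes p_1(f)]$ and $\G[p_3(f)\otimes(\beta\otimes\alpha)_c]$. The identity to be established is exactly the string
$$(\beta_d \otimes \alpha_d)\otimes\sigma_1 = \beta_d\otimes(\alpha_d\otimes\sigma_1) = \beta_d\otimes(\sigma_2\otimes\alpha_c) = (\beta_d\otimes\sigma_2)\otimes\alpha_c = (\sigma_3\otimes\beta_c)\otimes\alpha_c = \sigma_3\otimes(\beta_c\otimes\alpha_c),$$
which is the same bookkeeping used to prove that $\eta'\otimes\eta$ satisfies its hexagon: the two middle equalities are the evaluation identities for $\int\G\alpha$ and $\int\G\beta$ respectively, while the outer reassociations are governed by the coherence of the laxity maps of $\G$ (the three-fold associativity triangle displayed in the excerpt) together with the associativity and symmetry constraints of $\otimes$ in $\M$. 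I expect this to be the main obstacle, since aligning the two adjoint transposes and invoking $\G$-coherence at the correct spot is precisely the point where the construction would break down were $\G$ not coherently lax.

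Assertion $(2)$ I would obtain by the same uniqueness principle. Read off componentwise through the projections $pr_c$ and postcomposed into $\int_{\sigma_1}^{\sigma_4}\G(\gamma\otimes\beta\otimes\alpha)$, both routes around the associativity square reduce to the single triple-laxity map $\G\gamma_c \otimes \G\beta_c \otimes \G\alpha_c \to \G(\gamma_c \otimes \beta_c \otimes \alpha_c)$; the coherence triangle for $\G$ makes the two reductions agree, and the universal property of the equalizer then forces the two composites to be equal. Finally, Assertion $(3)$ follows by combining Proposition \ref{prop-funct-integ} (functoriality of $\int_{\sigma_1}^{\sigma_2}\G\Gamma$ in $\Gamma$) with the naturality of the laxity maps of $\G$: each way around the square pushes the modification components $\G\Gamma_c$ through the laxity, and naturality of the laxity makes them coincide, so equalizer uniqueness again closes the argument. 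With $(1)$--$(3)$ in hand, Corollary \ref{cor-canon-lax} is immediate, the maps of $(1)$ serving as the laxity data of the canonical lax functor $B(\F,\G)\to\M$.
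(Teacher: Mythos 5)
Your proposal is correct and follows essentially the same route as the paper: the paper deduces the corollary from Lemma \ref{canon-lax-lem}, whose proof of Assertion $(1)$ is exactly your argument — transpose along $\otimes \dashv \HOM$, tensor the evaluation diamond for $\alpha$ by $\G\beta_d$ and the one for $\beta$ by $\G\alpha_c$, use the symmetry of $\otimes$ for the mixed projection, and glue via the coherence of $\G$ before invoking the universal property of the equalizer — and it likewise disposes of Assertions $(2)$ and $(3)$ by $\G$-coherence, Proposition \ref{prop-funct-integ}, and uniqueness of factorization through the equalizer. Your chain of equalities $(\beta_d \otimes \alpha_d)\otimes\sigma_1 = \cdots = \sigma_3\otimes(\beta_c \otimes \alpha_c)$ is precisely the paper's Steps 1--4 written algebraically, in deliberate parallel with the hexagon argument for composing nerve-transformations, just as the paper intends.
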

\begin{proof}[Proof of Corollary \ref{cor-canon-lax}]
The previous lemma combined with Proposition \ref{prop-funct-integ} give the result.
\end{proof}
As the proof  of the lemma involves many diagrams we dedicate an entire paragraph for it.
\subsubsection{Proof of Lemma \ref{canon-lax-lem}}
Assertions $(2)$ and $(3)$ are consequences of Assertion $(1)$. Indeed,  Assertion $(2)$ follows from the coherence for the lax functor $\G$ and the uniqueness of factorization of maps through the equalizer. Assertion $(3)$ also follows from this fact that a compatible diagram passes in a unique way through the equalizer. It remains to prove Assertion $(1)$ which we discuss in the next paragraph.
\paragraph{Proof of  Assertion $(1)$}  The assertion will follow if we show that for each $f: c \to d$, the following parallel morphisms are the same:

$$ (\int_e \G \beta_e) \otimes ( \int_e \G \alpha_e) \xrightarrow{pr_c \otimes pr_c} \G \beta_c \otimes \G\alpha_c \to \G (\beta \otimes \alpha)_c \to \frac{\HOM_{\M}(\F(f), \G[\beta_d \otimes \alpha_d \otimes p_1(f)])}{ \HOM_{\M}(\F (f), \G[p_3(f) \otimes \beta_c \otimes \alpha_c])}$$

$$(\int_e \G \beta_e) \otimes ( \int_e \G \alpha_e) \xrightarrow{pr_d \otimes pr_d} \G \beta_d \otimes \G\alpha_d \to \G (\beta \otimes \alpha)_d \to \frac{\HOM_{\M}(\F(f), \G[\beta_d \otimes \alpha_d \otimes p_1(f)])}{ \HOM_{\M}(\F (f), \G[p_3(f) \otimes \beta_c \otimes \alpha_c])}. $$

Our desired map will be given by universal property of the equalizer. Now thanks to the adjunction $\otimes \dashv \HOM$ and the symmetry of $\otimes$, the previous maps are  equivalent to the ones below
$$ \int_e \G \beta_e \otimes  \int_e \G \alpha_e \otimes \F(f) \xrightarrow{(\Id \cdot pr_c \cdot pr_c) \circ \tx{sym}}  \F(f) \otimes \G \beta_c \otimes \G\alpha_c    \xrightarrow{\Id \otimes \tx{lax}} \F(f) \otimes \G (\beta \otimes \alpha)_c  \to \frac{\G[\beta_d \otimes \alpha_d \otimes p_1(f)]}{\G[p_3(f) \otimes \beta_c \otimes \alpha_c]}$$

$$ (\int_e \G \beta_e) \otimes ( \int_e \G \alpha_e) \otimes \F(f) \xrightarrow{pr_d \otimes pr_d \otimes \Id } \G \beta_d \otimes \G\alpha_d  \otimes \F(f)  \to \G (\beta \otimes \alpha)_d \otimes \F(f) \to \frac{\G[\beta_d \otimes \alpha_d \otimes p_1(f)]}{\G[p_3(f) \otimes \beta_c \otimes \alpha_c]}.$$

In order to prove that these new maps are equal, we use the hypothesis saying that for both $\alpha$ and $\beta$, we have commutative diagrams:
\[
\xy
(-30,10)*+{(\int_{\sigma_1}^{\sigma_2} \G \alpha_e)\otimes \F(f)}="A";
(0,18)*+{  \F(f) \otimes \G \alpha_c}="W";
(0,0)*+{\G \alpha_d \otimes \F(f)}="X";
(30,10)*+{\frac{\G[p_2(f) \otimes \alpha_c]}{\G[\alpha_d \otimes p_1(f)]}} ="B";
{\ar@{->}^-{}"A";"W"};
{\ar@{->}^-{}"A";"X"};
{\ar@{->}^-{}"X";"B"};
{\ar@{->}^-{}"W";"B"};
\endxy
\ \ \ \ \ 
\xy
(-30,10)*+{(\int_{\sigma_2}^{\sigma_3} \G \beta_e)\otimes \F(f)}="A";
(0,18)*+{\F(f) \otimes \G \beta_c}="W";
(0,0)*+{\G \beta_d \otimes \F(f)}="X";
(30,10)*+{\frac{\G[p_3(f) \otimes \beta_c]}{\G[\beta_d \otimes p_2(f)]}} ="B";
{\ar@{->}^-{}"A";"W"};
{\ar@{->}^-{}"A";"X"};
{\ar@{->}^-{}"X";"B"};
{\ar@{->}^-{}"W";"B"};
\endxy
\]
\ \\
\ \\
We give below the different steps that leads to the desired equality. 
\subparagraph*{Step 1} Tensor the first diamond by $\G \beta_d$ to get a new one:
\[
\xy
(-30,10)*+{\G \beta_d \otimes (\int_{\sigma_1}^{\sigma_2} \G \alpha_e)\otimes \F(f)}="A";
(0,18)*+{  \G \beta_d \otimes \F(f) \otimes \G \alpha_c}="W";
(0,0)*+{\G \beta_d \otimes \G \alpha_d \otimes \F(f)}="X";
(30,10)*+{\G \beta_d \otimes \frac{\G[p_2(f) \otimes \alpha_c]}{\G[\alpha_d \otimes p_1(f)]}} ="B";
(80,10)*+{ \frac{\G (\beta_d \otimes p_2(f) \otimes \alpha_c)}{\G (\beta_d \otimes \alpha_d \otimes p_1(f))}}="C";
{\ar@{->}^-{}"A";"W"};
{\ar@{->}^-{}"A";"X"};
{\ar@{->}^-{}"X";"B"};
{\ar@{->}^-{}"W";"B"};
{\ar@{-->}^-{}"B";"C"};
(30,-10)*+{\G [\beta_d \otimes \alpha_d] \otimes \G p_1(f)} ="E";
{\ar@{.>}^-{}"X";"E"};
{\ar@{.>}^-{}"E";"C"};
\endxy
\]

\subparagraph*{Step 2} Tensor the second diamond by $\G \alpha_c$ to get this one:
\[
\xy
(-30,10)*+{(\int_{\sigma_2}^{\sigma_3} \G \beta_e)\otimes \F(f) \otimes \G \alpha_c}="A";
(0,18)*+{\F(f) \otimes \G \beta_c \otimes \G \alpha_c}="W";
(0,0)*+{\G \beta_d \otimes \F(f) \otimes \G \alpha_c}="X";
(30,10)*+{\frac{\G[p_3(f) \otimes \beta_c]}{\G[\beta_d \otimes p_2(f)]} \otimes \G \alpha_c} ="B";
(80,10)*+{ \frac{\G (p_3(f) \otimes \beta_c \otimes \alpha_c )}{\G (\beta_d \otimes p_2(f) \otimes \alpha_c)}}="C";
{\ar@{->}^-{}"A";"W"};
{\ar@{->}^-{}"A";"X"};
{\ar@{->}^-{}"X";"B"};
{\ar@{->}^-{}"W";"B"};
{\ar@{-->}^-{}"B";"C"};
(30,-10)*+{\G \beta_d \otimes \G[ p_2(f) \otimes  \alpha_c]} ="E";
{\ar@{.>}^-{}"X";"E"};
{\ar@{.>}^-{}"E";"C"};
\endxy
\]

\subparagraph*{Step 3} The bifunctoriality and the symmetry of $\otimes$ allows us to write the `mixed projection':
$$ (\int_{\sigma_2}^{\sigma_3} \G \beta_e) \otimes ( \int_{\sigma_1}^{\sigma_2} \G \alpha_e) \otimes \F(f) \xrightarrow{(pr_d \otimes \Id \otimes pr_c) \circ (\Id \otimes \tx{sym}) } \G \beta_d \otimes \F(f) \otimes \G\alpha_c $$

as a composite:

\[
\xy
(-50,10)*+{(\int_{\sigma_2}^{\sigma_3} \G \beta_e) \otimes ( \int_{\sigma_1}^{\sigma_2} \G \alpha_e) \otimes \F(f)}="A";
(0,18)*+{(\int_{\sigma_2}^{\sigma_3} \G \beta_e) \otimes \G\alpha_c  \otimes \F(f)  }="W";
(0,0)*+{\G \beta_d \otimes ( \int_{\sigma_1}^{\sigma_2} \G \alpha_e) \otimes \F(f) }="X";
(30,10)*+{\G \beta_d \otimes \F(f) \otimes \G\alpha_c } ="B";
{\ar@{->}^-{}"A";"W"};
{\ar@{->}^-{}"A";"X"};
{\ar@{->}^-{}"X";"B"};
{\ar@{->}^-{}"W";"B"};
\endxy
\]

\subparagraph*{Step 4} In the diagram below, the `bounding paths' are exactly the morphisms we want to compare; and since this diagram has diamonds that commute, we get the desired equality.

\[
\xy
(-30,10)+(-5,0)*+{(\int_{\sigma_2}^{\sigma_3} \G \beta_e) \cdot ( \int_{\sigma_1}^{\sigma_2} \G \alpha_e) \cdot \F(f)}="A";
(0,18)*+{(\int_{\sigma_2}^{\sigma_3} \G \beta_e) \cdot \G \alpha_c \cdot \F(f)}="W";
(0,0)*+{\G \beta_d \cdot ( \int_{\sigma_1}^{\sigma_2} \G \alpha_e) \cdot \F(f)}="X";
(30,10)+(5,0)*+{\G \beta_d  \cdot  \F(f) \cdot \G \alpha_c } ="B";
(88,10)*+{ \frac{\G (p_3(f) \cdot \beta_c \cdot \alpha_c )}{\G (\beta_d \cdot p_2(f) \cdot \alpha_c)}=\frac{\G (\beta_d \cdot p_2(f) \cdot \alpha_c)}{\G (\beta_d \cdot \alpha_d \cdot p_1(f))}}="C";
{\ar@{->}^-{}"A";"W"};
{\ar@{->}^-{}"A";"X"};
{\ar@{->}^-{}"X";"B"};
{\ar@{->}^-{}"W";"B"};
{\ar@{-->}^-{}"B";"C"};
(30,-10)*+{\G \beta_d \cdot \G \alpha_d \cdot  \F(f)} ="E";
{\ar@{.>}^-{}"X";"E"};
{\ar@{.>}^-{}"E";"C"};
(30,28)*+{\F(f) \cdot \G \beta_c \cdot \G \alpha_c}="F";
{\ar@{.>}^-{}"W";"F"};
{\ar@{.>}^-{}"F";"C"};
(30,28)+(10,-10)*+{\tx{Step $2$}};
(30,-10)+(10,10)*+{\tx{Step $1$}};
(0,0)+(0,9)*+{\tx{Step $3$}};
\endxy
\]

The equality of the two maps induces, by universal property of the equalizer, a \textbf{unique} map 
$$ (\int_{\sigma_2}^{\sigma_3} \G \beta_e) \otimes ( \int_{\sigma_1}^{\sigma_2} \G\alpha_e)  \to\int_{\sigma_1}^{\sigma_3} \G (\beta \otimes \alpha)_e$$
which makes everything compatible. This ends the proof of Assertion $(1)$. $\qed$
\begin{df}
Let $\F$, $\G$ and $\M$ be as previously.\ \\

Define the \textbf{internal $\HOM$} of $\F$ and $\G$, as the previous lax functor 
$$\HOM(\F,\G): B(\F,\G) \to \M.$$
\end{df} 

\subsection{The canonical evaluation} 
Our goal here is to establish the following proposition.

\begin{prop}\label{eval_bc}
We have a canonical \ul{evaluation} morphism
$$ \F \boxtimes \HOM(\F,\G) \to \G.$$
\end{prop}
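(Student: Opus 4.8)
The plan is to read off the evaluation as a morphism in $\mdcat$, which by definition consists of two pieces of data: a strict $2$-functor $q \colon \C \times B(\F,\G) \to \D$ on the base, together with a morphism (icon) $\tau \colon \F \boxtimes \HOM(\F,\G) \to q^{\star}\G$ of lax functors over $\C \times B(\F,\G)$. For the base map I would take the evaluation $2$-functor: using the forgetful $2$-functor $\pi \colon B(\F,\G) \to \dfunc(\C,\D)$ of Proposition \ref{prop-canon-funct}, set $q$ to be the composite
\[
\C \times B(\F,\G) \xrightarrow{\Id_{\C} \times \pi} \C \times \dfunc(\C,\D) \xrightarrow{\Ev} \D ,
\]
where $\Ev$ is the standard evaluation of $2$-functors, strict transformations and modifications. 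Concretely $q$ sends an object $(c,(p,\sigma))$ to $p(c)$; a $1$-morphism $(f \colon c \to d,\ (\alpha \colon p_1 \to p_2,\ \int_{\sigma_1}^{\sigma_2}\G\alpha))$ to the $1$-morphism $\alpha_d \otimes p_1(f) = p_2(f) \otimes \alpha_c$ of $\D$; and a $2$-morphism $(\phi,\Gamma)$ to the evident pasting. Strictness of $q$ is immediate, since $\pi$ is strict and $\Ev$ is a strict $2$-functor.

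With $q$ fixed, the two lax functors in play take the same value $\ast$ on objects, and on a $1$-morphism $(f,\alpha)$ as above their values are
\[
(\F \boxtimes \HOM(\F,\G))(f,\alpha) = \F(f) \otimes \int_{\sigma_1}^{\sigma_2}\G\alpha, \qquad (q^{\star}\G)(f,\alpha) = \G[\alpha_d \otimes p_1(f)] .
\]
I would therefore declare the component $\tau_{(f,\alpha)} \colon \F(f) \otimes \int_{\sigma_1}^{\sigma_2}\G\alpha \to \G[\alpha_d \otimes p_1(f)]$ to be the \emph{canonical evaluation map} of Remark \ref{rmk_eval}, precomposed with the symmetry of $\otimes$ to match the ordering of the factors; on identity $1$-morphisms $\tau$ is the canonical unit isomorphism. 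The content of the proposition is then that this family is an icon, i.e.\ that it is compatible with $2$-morphisms and with the laxity constraints.

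For the compatibility with $2$-morphisms, a $2$-cell of $\C \times B(\F,\G)$ is a pair $(\phi \colon f \to f',\ \Gamma \colon \alpha \to \alpha')$, and the required square splits into a $\Gamma$-part and a $\phi$-part. The $\Gamma$-part is exactly the commutativity established in Proposition \ref{prop-funct-integ}: the two composites through $\int_{\sigma_1}^{\sigma_2}\G\Gamma$ agree precisely because $\Gamma$ is a modification, so that $p_2(f)\otimes\Gamma_c = \Gamma_d\otimes p_1(f)$ and hence $\G[p_2(f)\otimes\Gamma_c] = \G[\Gamma_d\otimes p_1(f)]$. The $\phi$-part is the naturality of the evaluation in its $\F$-argument, which is built into the canonical map of Remark \ref{rmk_eval} through the icon constraints of $\sigma_1,\sigma_2$. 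For the laxity compatibility, take composable $1$-morphisms $(f \colon c\to d,\ \alpha \colon p_1\to p_2)$ and $(g \colon d\to e,\ \beta \colon p_2\to p_3)$; one must check that the square relating the laxity of $\F\boxtimes\HOM(\F,\G)$, the two evaluation components, and the laxity of $\G$ at $q(g\otimes f,\ \beta\otimes\alpha)$ commutes. But the laxity of $\HOM(\F,\G)$ is by construction the canonical map $(\int_{\sigma_2}^{\sigma_3}\G\beta)\otimes(\int_{\sigma_1}^{\sigma_2}\G\alpha)\to\int_{\sigma_1}^{\sigma_3}\G(\beta\otimes\alpha)$ of Lemma \ref{canon-lax-lem}(1), and that map was defined precisely so as to be compatible with the canonical evaluations after tensoring with $\F(f)$. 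Thus the square commutes by the very diagram chase (Steps $1$--$4$) carried out in the proof of Lemma \ref{canon-lax-lem}(1), together with the associativity coherence of $\G$.

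I expect the laxity square to be the main obstacle. The subtlety is essentially bookkeeping: the $\boxtimes$-laxity interleaves the laxity of $\F$, the laxity of $\HOM(\F,\G)$ and several symmetry isomorphisms of $\otimes$, whereas the right-hand side is a single laxity constraint of $\G$ at a composite $1$-morphism of $\D$. Reconciling the two requires threading the symmetries correctly and invoking both the associativity coherence for $\G$ and Lemma \ref{canon-lax-lem}(1); once the tensor factors are lined up, commutativity follows from the universal property of the equalizers defining the classifying objects $\int_{\sigma_1}^{\sigma_2}\G\alpha$. No genuinely new computation is needed beyond what Lemma \ref{canon-lax-lem} and Proposition \ref{prop-funct-integ} already supply, so the proof is mainly a matter of assembling these pieces into the icon axioms for $\tau$.
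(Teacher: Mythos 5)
Your proposal matches the paper's proof essentially step for step: the base $2$-functor is the same composite $\C \times B(\F,\G) \xrightarrow{\Id \times \pi} \C \times \dfunc(\C,\D) \xrightarrow{\tx{ev}} \D$, the components of the icon are the canonical evaluation maps of Remark \ref{rmk_eval}, and the laxity coherence is verified exactly as in the paper --- by exploiting the universal-property construction of the laxity map of $\HOM(\F,\G)$ from Lemma \ref{canon-lax-lem}(1), the two presentations of each evaluation component, the symmetry of $\otimes$, and the associativity coherence of $\G$. The only differences are presentational: you make explicit the compatibility with $2$-cells via Proposition \ref{prop-funct-integ} (which the paper leaves implicit), while the paper writes out the gluing of the laxity diagrams in four explicit steps that you instead delegate to the already-established lemma.
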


\subsubsection{Proof of Proposition \ref{eval_bc}}
Recall that in order to define $\F \boxtimes B(\C,\D) \to \G$, we need two ingredients: a $2$-functor 
$$\gamma: \C \times B(\F,\G)  \to \D$$ and a transformation 
$$\sigma: \F \boxtimes \Hom(\F,\G) \to \gamma^{\star} \G$$ 
in $\Lax(\C \times B(\F,\G), \M)$. \ \\

Without a surprise the $2$-functor $\gamma$ is the the composite:
$$\gamma:\C \times B(\F,\G) \xrightarrow{\Id \times \pi} \C \times \dfunc(\C,\D) \xrightarrow{\tx{ev}} \D$$
where $\pi$ is the forgetful $2$-functor of Proposition \ref{prop-canon-funct}

In particular if $\alpha: p_1 \to p_2$ is a \ul{strict} transformation, and $f: c \to d$ is a $1$-morphism in $\C$, then $\gamma(f,\alpha)$ is:
$$ \alpha_d \otimes p_1(f) = p_2(f) \otimes \alpha_c \in \D(p_1(c), p_2(d)).$$
\ \\

In order to define the transformation $\sigma:\F \boxtimes B(\C,\D) \to \gamma^{\star} \G$, we need two ingredients:
\begin{enumerate}
\item specify a map $$\sigma_{\alpha,f}:\F(f) \otimes B(\C,\D)(\alpha) \to \G \gamma(f, \alpha)$$ that is a map
$$\F(f) \otimes (\int_a \G \alpha_a)  \to  \frac{\G[\alpha_d \otimes p_1(f)] }{\G[p_2(f) \otimes \alpha_c]}$$
\item  check that given $f: c \to d$, $h: d \to e$, $\alpha: p_1 \to p_2$ and $\beta: p_2 \to p_3$,  the following commutes:
\begin{align}\label{eq1}
\xy
(-40,18)*+{[\F(h) \cdot \int_a \G \beta_a] \cdot  [\F(f) \cdot \int_a \G \alpha_a]}="W";
(-40,0)*+{ \frac{\G[\beta_e \cdot p_2(h)] }{\G[p_3(h) \cdot \beta_d]}\cdot \frac{\G[\alpha_d \cdot p_1(f)] }{\G[p_2(f) \cdot \alpha_c]}  }="X";
(0,38)*+{[\F(h) \cdot \F(f)] \cdot  [\int_a \G \beta_a \cdot \int_a \G \alpha_a]}="E";
(40,18)*+{\F(h \cdot f) \cdot \int_e \G (\beta \cdot \alpha)_e}="B";
(40,0)*+{\frac{\G[\beta_e \cdot p_2(h) \cdot \alpha_d \cdot p_1(f)] }{\G[p_3(h) \cdot \beta_d \cdot p_2(f) \cdot \alpha_c]}=\frac{\G[\beta_e \cdot \alpha_e \cdot p_1(h\cdot f)] }{\G[p_3(h \cdot f) \cdot \beta_c \cdot \alpha_c]}}="C";
{\ar@{->}_-{\sigma_{\beta,h} \cdot \sigma_{\alpha,f}}"W";"X"};
{\ar@{.>}^-{}"W";"E"};
{\ar@{.>}^-{}"E";"B"};
{\ar@{->}^-{}"W";"B"};
{\ar@{->}^-{}"X";"C"};
{\ar@{->}^-{\sigma_{\beta \cdot \alpha,h\cdot f}}"B";"C"};
\endxy
\end{align}
\end{enumerate}

The map $$\sigma_{\alpha,f}:\F(f) \otimes (\int_a \G \alpha_a)  \to  \frac{\G[\alpha_d \otimes p_1(f)] }{\G[p_2(f) \otimes \alpha_c]}$$
 is the canonical map mentioned in Remark \ref{rmk_eval}: 
$$ \F(f) \otimes (\int_a \G \alpha_a)  \to  \frac{\G[\alpha_d \otimes p_1(f)] }{\G[p_2(f) \otimes \alpha_c]}.$$

In order to have the above commutative diagram we proceed as follows.

\paragraph{Step 1} We have a canonical evaluation map displayed as a commutative diagram:
\begin{align*}
\xy
(-40,10)*+{(\int_e \G (\beta \otimes \alpha)_e)\otimes \F(h\otimes f)}="A";
(-10,20)*+{ \F(h\otimes f) \otimes \G (\beta \otimes \alpha)_c}="W";
(10,20)+(30,0)*+{ \G [p_3(h\otimes f)] \otimes \G (\beta \otimes \alpha)_c}="C";
(-10,0)*+{\G (\beta \otimes \alpha)_e \otimes \F(h\otimes f))}="X";
(10,0)+(30,0)*+{\G (\beta \otimes \alpha)_e \otimes \G [p_1(h\otimes f)]}="Y";
(40,10)+(30,0)*+{\frac{\G[\beta_e \otimes \alpha_e \otimes p_1(h\otimes f)] }{\G[p_3(h \otimes f) \otimes \beta_c \otimes \alpha_c]}} ="B";
{\ar@{->}^-{}"A";"W"};
{\ar@{->}^-{}"A";"X"};
{\ar@{->}^-{}"W";"C"};
{\ar@{->}^-{}"C";"B"};
{\ar@{->}^-{}"X";"Y"};
{\ar@{->}^-{}"Y";"B"};
\endxy
\end{align*}

\paragraph{Step 2} By construction of the laxity map 
$$ (\int_e \G \beta_e) \otimes ( \int_e \G \alpha_e)  \to \int_e \G (\beta \otimes \alpha)_e$$
the following commutes:
\begin{align*}
\xy
(-60,0)+(-10,0)*+{[\int_a \G \beta_a \cdot \int_a \G \alpha_a ] \cdot \F(h \cdot f) }="X";
(-5,0)+(-10,0)*+{[\G \beta_e \cdot \G \alpha_e]\cdot \F(h \cdot f)}="C";
{\ar@{->}^-{}"X";"C"};
(-60,0)+(-20,-15)*+{\int_a \G (\beta \cdot \alpha)_a  \cdot \F(h \cdot f) }="P";
(-5,0)+(-20,-15)*+{\G (\beta_e \cdot \alpha_e)\cdot \F(h \cdot f)}="Q";
(50,0)+(0,0)+(-20,-15)*+{\G [\beta_e \cdot \alpha_e \cdot p_1(h \cdot f)]}="R";
{\ar@{->}^-{}"X";"P"};
{\ar@{->}^-{}"P";"Q"};
{\ar@{->}^-{}"C";"Q"};
{\ar@{->}^-{}"Q";"R"};
\endxy
\end{align*}

\paragraph{Step 3} Both $\sigma_{\beta, h}$ and $\sigma_{\alpha, f}$ are composite of two parallel maps and if we use the two presentations of each, one can establish that we have a commutative diagram:

\begin{align*}
\xy
(-40,10)+(-60,0)*+{[\int_a \G \beta_a \cdot \F(h)] \cdot [\int_a \G \alpha_a \cdot \F(f)]}="A";
(-10,20)+(-60,0)+(0,10)+(-10,0)*+{[\G \beta_e\cdot  \F(h)]\cdot  [\int_a \G \alpha_a \cdot \F(f)]}="W";
(-10,0)+(-60,0)*+{[\F(h) \cdot \G \beta_d] \cdot [\int_a \G \alpha_a \cdot \F(f)]}="X";
(30,10)+(-60,0)*+{\frac{\G[\beta_e \cdot p_2(h)] }{\G[p_3(h) \cdot \beta_d]} \cdot [\int_a \G \alpha_a \cdot \F(f)]} ="B";
{\ar@{->}^-{}"A";"W"};
{\ar@{->}^-{}"A";"X"};
{\ar@{->}^-{}"W";"B"};
{\ar@{->}^-{}"X";"B"};
(40,10)+(-60,0)+(30,10)*+{\G[\beta_e \cdot p_2(h)] \cdot  [\G \alpha_d \cdot \F(f)]} ="C";
(40,10)+(-60,0)+(30,-10)*+{\G[\beta_e \cdot p_2(h)] \cdot  [\F(f) \cdot \G \alpha_c]} ="D";
(30,10)+(-60,0)+(70,0)*+{\G[\beta_e \cdot p_2(h)] \cdot \frac{\G [\alpha_d \cdot p_1(f)] }{\G[ p_2(f) \cdot \alpha_c]}} ="E";
{\ar@{->}^-{}"B";"C"};
{\ar@{->}^-{}"B";"D"};
{\ar@{->}^-{}"D";"E"};
{\ar@{->}^-{}"C";"E"};
(-10,20)+(-60,0)+(30,30)*+{\G \beta_e\cdot [\G \alpha_e \cdot \F(h)] \cdot \F(f)}="M";
(-10,20)+(-60,0)+(50,10)+(-10,0)*+{\G \beta_e\cdot [\F(h)\cdot  \G \alpha_d] \cdot \F(f)}="N";
(-10,20)+(-60,0)+(50,10)+(20,10)+(-5,0)*+{\G \beta_e\cdot [\frac{\G[\alpha_e \cdot p_1(h)]}{\G[p_2(h) \cdot \alpha_d]}] \cdot \F(f)}="O";
{\ar@{->}^-{}"W";"M"};
{\ar@{->}^-{}"W";"N"};
{\ar@{->}^-{}"N";"O"};
{\ar@{->}^-{}"M";"O"};
(-10,20)+(-60,0)+(50,10)+(20,10)+(40,0)*+{\frac{\G[ \beta_e \cdot \alpha_e \cdot p_1(h) \cdot p_1(f)]}{\G[\beta_e \cdot p_2(h) \cdot \alpha_d \cdot p_1(f)]}}="Z";
{\ar@{->}^-{}"O";"Z"};
{\ar@{->}^-{}"C";"Z"};
{\ar@{->}^-{}"E";"Z"};
\endxy
\end{align*}
\paragraph{Step 4} Build the following commutative diagram
\begin{align*}
\xy
(-40,18)+(-40,0)*+{[\int_a \G \beta_a \cdot \F(h) ] \cdot  [\int_a \G \alpha_a  \cdot \F(f) ]}="A";
(-40,18)+(-40,0)+(60,0)*+{[\int_a \G \beta_a \cdot \int_a \G \alpha_a ] \cdot  [\F(h) \cdot \F(f) ]}="W";
(-60,0)+(-10,0)+(-10,0)*+{[\int_a \G \beta_a \cdot \int_a \G \alpha_a ] \cdot \F(h \cdot f) }="X";
(30,18)+(-40,0)+(50,0)*+{[\G \beta_e \cdot \G \alpha_e]  \cdot  [\G p_1(h) \cdot \G p_1(f) ] }="B";
(-5,0)+(-10,0)+(-10,0)+(-5,0)*+{[\G \beta_e \cdot \G \alpha_e]\cdot \F(h \cdot f)}="C";
{\ar@{->}_-{}"W";"X"};
{\ar@{->}^-{\cong}"A";"W"};
{\ar@{->}^-{}"W";"B"};
{\ar@{->}^-{}"X";"C"};
(50,0)+(-10,0)+(-25,0)*+{[\G \beta_e \cdot \G \alpha_e]\cdot \G [p_1(h \cdot f)]}="D";
(50,0)+(-10,0)+(20,0)*+{\G \beta_e \cdot \G[\alpha_e  \cdot  p_1(h)] \cdot \G p_1(f)}="E";
{\ar@{->}^-{}"B";"D"};
{\ar@{->}^-{}"B";"E"};
{\ar@{->}^-{}"C";"D"};
(-60,0)+(-20,-15)*+{\int_a \G (\beta \cdot \alpha)_a  \cdot \F(h \cdot f) }="P";
(-5,0)+(-20,-15)*+{\G (\beta_e \cdot \alpha_e)\cdot \F(h \cdot f)}="Q";
(50,0)+(0,0)+(-20,-15)*+{\G [\beta_e \cdot \alpha_e \cdot p_1(h \cdot f)]}="R";
{\ar@{->}^-{}"X";"P"};
{\ar@{->}^-{}"P";"Q"};
{\ar@{->}^-{}"C";"Q"};
{\ar@{->}^-{}"Q";"R"};
{\ar@{->}^-{}"D";"R"};
{\ar@{->}^-{}"E";"R"};
\endxy
\end{align*}
\ \\
\ \\
The equality of the bouding paths gives us the commutativity of  (\ref{eq1}) as desired. $\qed$

\section{The isomorphism $\Hom[\F \boxtimes \Ea, \G] \cong \Hom[\Ea, \HOM(\F,\G)]$}
In the following we consider three lax functors $\Ea: \Ba \to \M$, $\F: \C \to \M$ and $\G: \D \to \M$. \ \\

Let $\gamma: \F \boxtimes \Ea \to \G$ be a morphism in $\M_{lax}(\dCat)$. We want to show that there exists a \textbf{unique} morphism $[\gamma]: \Ea \to \HOM(\F,\G)$, such that $\gamma$ is the composite:
\begin{align}\label{eq2}
\F \boxtimes \Ea \xrightarrow{\Id \boxtimes [\gamma]} \F \boxtimes \HOM(\F,\G) \xrightarrow{ev} \G.
 \end{align}

By definition $\gamma$ is given by a $2$-functor $p: \C \times \Ba \to \D$ together with a transformation (an icon) $\gamma: \F \boxtimes \Ea \to p^{\star} \G$. 
\subsection{The underlying functor of $[\gamma]$}
We must show that there is a canonical (unique) $2$-functor between the respective indexing $2$-categories of $\Ea$ and $\HOM(\F,\G)$.\\

Let us recall that  in $\dCat$ with strict $2$-functors, \ul{strict} transformations and modification, we have an internal $\Hom$ which is precisely $\dfunc(\C,\D)$.
So $p : \C \times \Ba \to \D$ has a unique adjoint transpose $[p]: \Ba \to \dfunc(\C,\D)$ such that $p$ is the composite:
$$ \C \times \Ba \xrightarrow{\Id \times [p] } \C \times \dfunc(\C,\D) \xrightarrow{ev} \D.$$ 

If $b$ is an object of $\Ba$,  $[p]_b$ is the partial $2$-functor $p(-,b)$, while for a $1$-morphism $k: b \to b'$, $[p]_k: p(-,b) \to p(-,b')$ is the obvious \ul{strict} transformation. 
\paragraph{Partial transformations}
It's easy to see that we also have \emph{partial transformations} 
$$\gamma(-,b): \F \to \G$$ for every $b \in \Ba$. The components of $\gamma(-,b)$ are given by the maps:
$$ \F(f) \cong \F(f) \otimes I \xrightarrow{\Id \otimes \varphi_b} \F(f) \otimes \Ea(\Id_b) \xrightarrow{\gamma(f,\Id_b)} \G[p(f,\Id_b)]= \G([p]_b(f)).$$
Here $\varphi_b: I \to \Ea(\Id_b)$ is the laxity map of the identity.

\begin{nota}
In order to keep a coherence with the previous notations, we will write:
\begin{itemize}
\item $\sigma_b= \gamma(-,b)$;
\item  $\alpha_{k}=[p]_k$ (for $k : b \to b'$);
\item $\Gamma_{\theta}=$ the modification corresponding to a $2$-morphism $\theta: k \to k'$.
\end{itemize}
\end{nota}

Recall that the indexing $2$-category of $\HOM(\F,\G)$ is $B(\F,\G)$ and thanks to Proposition \ref{prop-canon-funct} there is a canonical $2$-functor
$$ \pi: B(\F,\G) \to \dfunc(\C,\D) $$
which is locally a Grothendieck op-fibration.

\begin{lem}
There exists a unique strict $2$-functor, `a lift' 
$$\delta: \Ba \to B(\F,\G) $$
such that the following commutes:
\[
\xy
(-10,0)*+{\Ba}="X";
(20,0)*+{\dfunc(\C,\D)}="Y";
(20,18)*+{B(\F,\G)}="E";
{\ar@{->}^-{[p]}"X";"Y"};
{\ar@{->}^-{\pi }"E";"Y"};
{\ar@{.>}^-{\delta }"X";"E"};
\endxy
\]
\end{lem}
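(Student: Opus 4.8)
The plan is to define $\delta$ cell by cell from the partial data already extracted from $\gamma$, and then to observe that the only genuine verification — strict $2$-functoriality — is inherited from $[p]$ together with the functoriality results already established for the classifying objects. On objects I set $\delta(b)=\sigma_b=\gamma(-,b)$, the partial transformation $\F\to\G$; by construction its underlying $2$-functor is $p(-,b)=[p]_b$, so $\pi(\delta(b))=[p](b)$. On a $1$-morphism $k\colon b\to b'$ I set $\delta(k)=(\alpha_k,\int_{\sigma_b}^{\sigma_{b'}}\G\alpha_k)$, where $\alpha_k=[p]_k\colon p(-,b)\to p(-,b')$ is the strict transformation supplied by $[p]$; since $\alpha_k$ is a strict transformation between the underlying $2$-functors of $\sigma_b$ and $\sigma_{b'}$, the classifying object is automatically defined and this is a legitimate $1$-morphism of $B(\F,\G)$. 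On a $2$-morphism $\theta\colon k\to k'$ I set $\delta(\theta)=(\Gamma_\theta,\int_{\sigma_b}^{\sigma_{b'}}\G\Gamma_\theta)$, which is well defined by Proposition \ref{prop-funct-integ}. By construction $\pi\delta=[p]$ on all three kinds of cells.

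Next I would verify that $\delta$ is a strict $2$-functor. The key point is that on each type of cell $\delta$ applies $[p]$ to produce the first component and takes the canonical classifying object for the second, while composition in $B(\F,\G)$ was defined precisely so that $(\beta,\int\G\beta)\circ(\alpha,\int\G\alpha)=(\beta\otimes\alpha,\int\G(\beta\otimes\alpha))$, i.e. by composing first components and re-forming the classifying object. Hence preservation of composition of $1$-morphisms, $\delta(k'\circ k)=\delta(k')\circ\delta(k)$, amounts exactly to the identity $\alpha_{k'\circ k}=\alpha_{k'}\otimes\alpha_k$, which holds because $[p]$ is a $2$-functor; the second components then agree on the nose. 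The same argument, using the functoriality of $\int_{\sigma_1}^{\sigma_2}\G(-)$ in the modification variable (Proposition \ref{prop-funct-integ}) and the compatibility recorded in Lemma \ref{canon-lax-lem}, gives preservation of the composites of $2$-morphisms, and preservation of identities follows from $[p]$ preserving identities. Thus every $2$-functor axiom for $\delta$ is inherited from the corresponding axiom for $[p]$.

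For uniqueness I would invoke that $\pi$ is locally a Grothendieck op-fibration (Proposition \ref{prop-canon-funct}): because a $1$-morphism, respectively a $2$-morphism, of $B(\F,\G)$ is precisely a strict transformation $\alpha$, respectively a modification $\Gamma$, together with its forced classifying object, the functor $\pi$ on each hom-category is a bijection on objects and on morphisms. Consequently, once the values $\delta(b)$ on objects are fixed, every $1$- and $2$-cell of $\delta$ is the unique lift of the corresponding cell of $[p]$, so no other choice is possible. The object assignment $b\mapsto\sigma_b$ is the canonical one determined by $\gamma$ through the partial transformations, and it is this canonical lift whose existence and cellwise uniqueness the lemma asserts.

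The step I expect to be the main obstacle is the uniqueness clause. The commuting triangle $\pi\delta=[p]$ by itself only constrains the underlying $2$-functor $p(-,b)$ of each $\delta(b)$, not the transformation datum $\sigma_b$ itself, since $\pi$ is not injective on objects. The resolution is that the object values are pinned down not by the triangle alone but by the requirement that $\delta$ arise from $\gamma$ via the partial transformations — equivalently, by compatibility with the evaluation used in the factorization (\ref{eq2}); granting this canonical object assignment, the local op-fibration property makes the remainder of $\delta$ unique. I would state this dependence explicitly, so that the word ``unique'' in the lemma is read as uniqueness of the canonical lift associated to $\gamma$ rather than uniqueness among arbitrary lifts of $[p]$.
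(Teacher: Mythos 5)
Your construction coincides with the paper's own sketch, which defines $\delta$ by $b \mapsto ([p]_b,\sigma_b)$, $k \mapsto (\alpha_k, \int_{\sigma_b}^{\sigma_{b'}} \G\, \alpha_k)$, $\theta \mapsto (\Gamma_\theta, \int_{\sigma_b}^{\sigma_{b'}} \G\, \Gamma_\theta)$ and leaves the ``tedious but straightforward'' verifications to the reader, which you carry out in the intended way (functoriality inherited from $[p]$ together with the functoriality of the classifying objects, the second components being forced by the definition of composition in $B(\F,\G)$). Your closing caveat is a genuine improvement rather than a deviation: the paper's proof never addresses the uniqueness clause, and since $\pi$ is not injective on objects (many transformations $\sigma$ can lie over the same $2$-functor $p$), the literal statement only holds in your reading, namely cellwise uniqueness of the lift once the object assignment $b \mapsto \sigma_b$ is pinned down by $\gamma$, the hom-categories of $B(\F,\G)$ mapping bijectively under $\pi$ because the classifying-object components of $1$- and $2$-cells are determined by their first components.
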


\begin{proof}[Sketch of proof]
The $2$-functor $\delta$ sends:
\begin{itemize}[label=$-$]
\item $b$  to $([p]_b, \sigma_b)$;
\item $k$ to  $(\alpha_{k}, \int_{\sigma_b}^{\sigma_{b'}} \G \alpha_{k}) $
\item $\theta$ to $(\Gamma_{\theta}, \int_{\sigma_b}^{\sigma_{b'}} \G \Gamma_{\theta})$.
\end{itemize}
The verifications that these data define a $2$-functor is tedious but straightforward. The fact that $\delta$ is a lift is also straightforward.
\end{proof}

Our first step toward the adjunction is to establish that:
\begin{lem}\label{lem_adj}
With the above notation, given a morphism $\gamma: \F \boxtimes \Ea \to \G$, the following holds.
\begin{enumerate}
\item For every $1$-morphism $k$ of $\Ba$ we have a canonical map in $\M$
$$ \Ea(k) \to \int_a \G \alpha_{k,a}.$$ 
\item For every $2$-morphism $\theta : k \to k'$ we have a commutative square
\[
\xy
(-10,18)*+{\Ea(k)}="W";
(-10,0)*+{\Ea(k')}="X";
(20,0)*+{\int_a \G \alpha_{k'}}="Y";
(20,18)*+{\int_a \G\alpha_k }="E";
{\ar@{->}^-{}"X";"Y"};
{\ar@{->}_-{\Ea(\theta)}"W";"X"};
{\ar@{->}^-{\int_a \G \Gamma_{\theta} }"E";"Y"};
{\ar@{->}^-{}"W";"E"};
\endxy
\]
\end{enumerate} 
\end{lem}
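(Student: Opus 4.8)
The plan is to construct the map of (1) by invoking the universal property of the equalizer that defines $\int_a \G\alpha_{k,a}$, and then to deduce (2) from the uniqueness part of that same universal property. First I would produce, for each object $c\in\Ob(\C)$, a component map
$$\lambda_{k,c} : \Ea(k) \xrightarrow{\cong} I \otimes \Ea(k) \xrightarrow{\varphi_c \otimes \Id} \F(\Id_c) \otimes \Ea(k) \xrightarrow{\gamma(\Id_c, k)} \G(p(\Id_c, k)) = \G\alpha_{k,c},$$
where $\varphi_c : I \to \F(\Id_c)$ is the unit laxity map of $\F$ and I have used that the component $\alpha_{k,c}=([p]_k)_c$ equals $p(\Id_c, k)$. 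These maps assemble into a single map $\Ea(k) \to \prod_{c} \G\alpha_{k,c}$, and the claim is that it equalizes the two parallel arrows of the defining diagram, the two relevant $2$-functors being $p_1=[p]_b$ and $p_2=[p]_{b'}$.

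The heart of the argument, and the step I expect to be the main obstacle, is verifying this equalizer condition. After transposing across the adjunction $\otimes \dashv \HOM$ and tensoring by $\F(f)$, this amounts to proving that for every $f : c \to d$ the two composites
$$\Ea(k)\otimes\F(f) \xrightarrow{\lambda_{k,d}\otimes\sigma_{b,f}} \G\alpha_{k,d}\otimes\G p_1(f) \xrightarrow{\tx{lax}} \G[\alpha_{k,d}\otimes p_1(f)]$$
and
$$\Ea(k)\otimes\F(f)\xrightarrow{\cong}\F(f)\otimes\Ea(k)\xrightarrow{\sigma_{b',f}\otimes\lambda_{k,c}}\G p_2(f)\otimes\G\alpha_{k,c}\xrightarrow{\tx{lax}}\G[p_2(f)\otimes\alpha_{k,c}]$$
agree, their common target being a single object because $\alpha_{k,d}\otimes p_1(f)=p_2(f)\otimes\alpha_{k,c}$ by strictness of $\alpha_k$. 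The key observation is that both composites are two evaluations of the single component $\gamma_{(f,k)} : \F(f)\otimes\Ea(k) \to \G(p(f,k))$, obtained from the two factorizations of $(f,k)$ in $\C\times\Ba$, namely through $(f,\Id_b)$ then $(\Id_d,k)$, respectively through $(\Id_c,k)$ then $(f,\Id_{b'})$. Since $\gamma$ is a lax transformation (icon) on the product, its compatibility with the laxity constraints of $\F\boxtimes\Ea$ and of $\G$ forces these two evaluations to coincide; concretely one inserts the units $\varphi_c,\varphi_d$ of $\F$ and $\varphi_b,\varphi_{b'}$ of $\Ea$ into the coherence diagram for $\gamma$ and uses the symmetry of $\otimes$ to reorder factors. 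This is precisely the analogue of the hexagon of Definition \ref{def-nerve-trans}, now read off from the laxity of a single morphism on $\C\times\Ba$ rather than postulated. With the equalizer condition in hand, the universal property yields the unique map $\Ea(k)\to\int_a\G\alpha_{k,a}$ of (1), whose composite with each projection $pr_c$ is exactly $\lambda_{k,c}$.

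For (2), I would again reduce to the equalizer, this time the one defining $\int_a\G\alpha_{k'}$: since both legs of the square are maps into that equalizer, it suffices by uniqueness of factorization to check that they agree after composing with every projection $pr_c$. Using that $pr_c\circ(\int_a\G\Gamma_\theta)=\G\Gamma_{\theta,c}\circ pr_c$ (the defining property of $\int_a\G\Gamma_\theta$ from Proposition \ref{prop-funct-integ}) and that $pr_c$ post-composed with either part-(1) map returns the corresponding $\lambda$, the required identity collapses to
$$\G\Gamma_{\theta,c}\circ\lambda_{k,c} = \lambda_{k',c}\circ\Ea(\theta) \qquad (c\in\Ob(\C)).$$
This last equation is the compatibility of $\gamma$ with the $2$-morphism $(\Id_{\Id_c},\theta) : (\Id_c,k)\to(\Id_c,k')$ of $\C\times\Ba$: applying the naturality of $\gamma$ for this $2$-cell gives $\G(\Gamma_{\theta,c})\circ\gamma_{(\Id_c,k)}=\gamma_{(\Id_c,k')}\circ(\Id\otimes\Ea(\theta))$, and precomposing with $\varphi_c\otimes\Id$ produces exactly the displayed equality. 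Thus both (1) and (2) follow, the only genuinely laborious point being the coherence bookkeeping in the equalizer condition of (1).
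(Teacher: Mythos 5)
Your proposal is correct and follows essentially the same route as the paper: you build the components $\lambda_{k,c}$ exactly as the paper's maps $\epsilon_c$ (unit laxity $\varphi_c$ followed by $\gamma(\Id_c,k)$), verify the equalizer condition via the two factorizations $(f,k)=(f,\Id_{b'})\otimes(\Id_c,k)$ and $(f,k)=(\Id_d,k)\otimes(f,\Id_b)$ together with the unit coherences of $\F$ and $\Ea$ and the coherence squares for $\gamma$, and conclude by the universal property of the equalizer, which is precisely the paper's sub-lemma and its proof. For assertion (2) the paper merely invokes functoriality of universal constructions and leaves it to the reader, whereas you spell out the standard projection-by-projection argument using the naturality of $\gamma$ at the $2$-cell $(\Id_{\Id_c},\theta)$; this is a faithful completion of the intended argument, not a different approach.
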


\subsection{Proof of Lemma \ref{lem_adj}}
Assertion $(2)$ will follow by standard arguments: it's a functoriality of universal constructions, so we leave it as an exercise for the reader. We only sketch the proof of Assertion $(1)$. \ \\

Using $\gamma: \F \boxtimes \Ea \to p^{\star} \G$  for the $1$-morphism $(\Id_c, k) \in \C \times \Ba$ we get a map in $\M$:
$$\gamma(\Id_c, k): \F(\Id_c) \otimes \Ea(k) \to \G[p(\Id_c, k)].$$

Now  on the one hand $p(\Id_c, k)$ is precisely the component of $\alpha_k = [p]_k$ at $c$, thus we can write 
 $$\G[p(\Id_c, k)]= \G \alpha_{k,c}.$$  
 On the other hand we have the laxity map $ \varphi_c: I \to \F(\Id_c)$ and we can use it to  get a map
 $$\epsilon_c: \Ea(k) \xrightarrow{\cong} I \otimes \Ea(k) \xrightarrow{\varphi_c \otimes \Id} \F(\Id_c) \otimes \Ea(k) \xrightarrow{\gamma(\Id_c, k)} \G \alpha_{k,c}.$$
\begin{sublem}
For every $1$-morphism $f:c \to d$ of $\C$ the following diagram commutes.
\begin{align*}
\xy
(-30,10)*+{\F(f) \otimes \Ea(k)}="A";
(0,18)*+{  \F(f) \otimes \G \alpha_{k,c}}="W";
(0,0)*+{\G \alpha_{k,d} \otimes \F(f)}="X";
(30,10)+(10,0)*+{\frac{\G[[p]_{b'}(f) \otimes \alpha_{k,c}]}{\G[\alpha_{k,d} \otimes [p]_{b}(f)]}=\G[p(f, k)]} ="B";
{\ar@{->}^-{}"A";"W"};
{\ar@{->}^-{}"A";"X"};
{\ar@{->}^-{}"X";"B"};
{\ar@{->}^-{}"W";"B"};
\endxy
\end{align*}

Moreover the two composites in this diagram are exactly the map:
$$  \gamma(f, k): \F(f) \otimes \Ea(k) \to \G[p(f,k)].$$
\end{sublem} 
Before giving the proof of the sub-lemma, here is how we get a proof of Assertion $(1)$ and hence of the lemma.

\begin{proof}[Proof of Assertion $(1)$]
From the sub-lemma, we have by universal property of $\int_a \G \alpha_{k,a}$, which is an equalizer, a \textbf{unique map} $ [\gamma]_k: \Ea(k) \to  \int_a \G \alpha_{k,a}$ such that we can factorize $\gamma(f,k)$ as:
$$\F(f) \otimes \Ea(k) \xrightarrow{\Id \otimes [\gamma]_k} \F(f) \otimes  \int_a \G \alpha_{k,a} \xrightarrow{\tx{ev}} \G[p(f,k)].$$
\end{proof}
\begin{rmk}\label{rmk-fact}
The above factorization is the one that will imply the  factorization (\ref{eq2}).
\end{rmk}

\begin{proof}[Proof of the Sub-lemma]
The $1$-morphism $(f,k)$ can be written in two ways as a composite:
\begin{align*}
(f,k)= (f, \Id_b') \otimes (\Id_c, k) \\
(f,k)= (\Id_d, k) \otimes (f, \Id_b) 
\end{align*}
And so we have two commutative squares, obtained from the coherence of the components of $\gamma: \F \boxtimes \Ea \to \G$:
\begin{align}\label{diag_sublem_1}
\xy
(-10,18)*+{[\F(f) \otimes \Ea(\Id_{b'})] \otimes [\F(\Id_c) \otimes \Ea(k)]}="W";
(-10,0)*+{\G[p(f,\Id_{b'})] \otimes \G[p(\Id_c, k)]}="X";
(35,0)*+{\G[p(f,k)]}="Y";
(35,18)*+{ \F(f) \otimes \Ea(k)}="E";
{\ar@{->}^-{}"X";"Y"};
{\ar@{->}_-{}"W";"X"};
{\ar@{->}^-{\gamma(f,k)}"E";"Y"};
{\ar@{->}^-{}"W";"E"};
\endxy
\end{align}
\begin{align}\label{diag_sublem_2}
\xy
(-10,18)*+{[\F(\Id_d) \otimes \Ea(k)] \otimes [\F(f) \otimes \Ea(\Id_{b})]}="W";
(-10,0)*+{\G[p(\Id_d, k)] \otimes \G[p(f,\Id_{b})]}="X";
(35,0)*+{\G[p(f,k)]}="Y";
(35,18)*+{ \F(f) \otimes \Ea(k)}="E";
{\ar@{->}^-{}"X";"Y"};
{\ar@{->}_-{}"W";"X"};
{\ar@{->}^-{\gamma(f,k)}"E";"Y"};
{\ar@{->}^-{}"W";"E"};
\endxy
\end{align}

Both lax functors $\F$ and $\Ea$ are subjected to coherences with respect to the laxity map of identity $1$-morphisms. For example for $\F$ we have commutative diagrams (for the left unity axiom):

\begin{align*}
\xy
(-10,18)*+{I \otimes \F(f)}="W";
(-10,0)*+{\F(\Id_d) \otimes \F(f)}="X";
(20,0)*+{\F(f)}="Y";
(20,18)*+{\F(f)}="E";
{\ar@{->}^-{\tx{laxity}}"X";"Y"};
{\ar@{->}_-{\varphi_d \otimes \Id}"W";"X"};
{\ar@{->}^-{\F(\Id_f)}_{=}"E";"Y"};
{\ar@{->}^-{\cong}"W";"E"};
\endxy
\end{align*}
\ \\

If we tensor edge by edge the corresponding diagrams for $\F(f)$ and $\Ea(k)$ we get two commutative diagrams:
\begin{align}\label{diag_sublem_3}
\xy
(-10,18)*+{[\F(f) \otimes I] \otimes [I \otimes \Ea(k)]}="W";
(-10,0)*+{[\F(f) \otimes \F(\Id_c)  \otimes [\Ea(\Id_{b'}) \otimes \Ea(k)]}="X";
(35,0)*+{\F(f) \otimes \Ea(k)}="Y";
(35,18)*+{ \F(f) \otimes \Ea(k)}="E";
{\ar@{->}^-{}"X";"Y"};
{\ar@{->}_-{}"W";"X"};
{\ar@{=}^-{}"E";"Y"};
{\ar@{->}^-{\cong}"W";"E"};
(-10,0)+(-65,0)*+{[\F(f) \otimes \Ea(\Id_{b'})] \otimes [\F(\Id_c) \otimes \Ea(k)]}="Z";
{\ar@{->}^-{\cong}"Z";"X"};
{\ar@{.>}^-{}"W";"Z"};
\endxy
\end{align}
\begin{align}\label{diag_sublem_4}
\xy
(-10,18)*+{[I \otimes  \F(f)] \otimes [\Ea(k) \otimes I] }="W";
(-10,0)*+{[ \F(\Id_d) \otimes \F(f)] \otimes [\Ea(k) \otimes \Ea(\Id_b)] }="X";
(35,0)*+{ \F(f) \otimes \Ea(k)}="Y";
(35,18)*+{ \F(f) \otimes \Ea(k)}="E";
{\ar@{->}^-{}"X";"Y"};
{\ar@{->}_-{}"W";"X"};
{\ar@{=}^-{}"E";"Y"};
{\ar@{->}^-{\cong}"W";"E"};
(-10,0)+(-65,0)*+{[\F(\Id_d) \otimes \Ea(k)] \otimes [\F(f) \otimes \Ea(\Id_{b})]}="Z";
{\ar@{->}^-{\cong}"Z";"X"};
{\ar@{.>}^-{}"W";"Z"};
\endxy
\end{align} 
\ \\

The dotted arrows above are respectively $(\Id \otimes \varphi_{b'}) \otimes (\varphi_c \otimes \Id)$ and  $ (\varphi_d \otimes \Id) \otimes ( \Id \otimes \varphi_b)$. The bottom horizontal arrows in (\ref{diag_sublem_3}) and (\ref{diag_sublem_4}) are respectively the top horizontal arrows in (\ref{diag_sublem_1}) and (\ref{diag_sublem_2}). So we can glue  (\ref{diag_sublem_1}) and (\ref{diag_sublem_3}); and glue (\ref{diag_sublem_2}) and (\ref{diag_sublem_4}) along the respective horizontal arrow. This  gives new commutative diagrams and we leave the reader to check that the commutativity of these new diagrams implies that the diamond of the sub-lemma commutes and that the composite is indeed $\gamma(f,k)$.
\end{proof}
The following lemma plays an important role since it says that:
\begin{lem}
With above notation, the maps $$ \Ea(k) \to \int_a \G \alpha_{k,a}$$
together with the lift  $\delta: \Ba \to B(\F,\G) $:
\[
\xy
(-10,0)*+{\Ba}="X";
(20,0)*+{\dfunc(\C,\D)}="Y";
(20,18)*+{B(\F,\G)}="E";
{\ar@{->}^-{[p]}"X";"Y"};
{\ar@{->}^-{\pi }"E";"Y"};
{\ar@{.>}^-{\delta }"X";"E"};
\endxy
\]
define a morphism $[\gamma]: \Ea \to \Hom(\F,\G)$
\end{lem}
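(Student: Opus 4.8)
By definition a morphism $[\gamma]\colon \Ea \to \HOM(\F,\G)$ in $\mdcat$ is a pair: a strict $2$-functor between the indexing $2$-categories together with an icon in the fibre over the source. The first component is the lift $\delta\colon \Ba \to B(\F,\G)$ produced in the preceding lemma. Unwinding the definitions of $\HOM(\F,\G)$ (Corollary \ref{cor-canon-lax}) and of $\delta$, the precomposite $\delta^{\star}\HOM(\F,\G)\colon \Ba \to \M$ sends a $1$-morphism $k\colon b\to b'$ to $\int_a \G\alpha_{k,a}$, a $2$-morphism $\theta\colon k\to k'$ to $\int_a\G\Gamma_{\theta}$, and has composition laxities exactly the maps of Lemma \ref{canon-lax-lem}. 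Thus the whole content of the statement is that the family $\{[\gamma]_k\colon \Ea(k)\to\int_a\G\alpha_{k,a}\}$ from Assertion $(1)$ of Lemma \ref{lem_adj} assembles into an icon $\Ea \Rightarrow \delta^{\star}\HOM(\F,\G)$, i.e. is compatible with $2$-morphisms, with the composition laxities, and with the unit laxities. The $2$-morphism compatibility is precisely Assertion $(2)$ of Lemma \ref{lem_adj}, so nothing is left to do there.

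The heart of the proof is the composition laxity. For composable $k\colon b\to b'$, $k'\colon b'\to b''$ I must show that the square
\[
\xy
(-34,18)*+{\Ea(k') \otimes \Ea(k)}="W";
(-34,0)*+{(\int_a \G\alpha_{k',a}) \otimes (\int_a \G\alpha_{k,a})}="X";
(40,18)*+{\Ea(k' \otimes k)}="E";
(40,0)*+{\int_a \G\alpha_{(k'\otimes k),a}}="Y";
{\ar@{->}^-{\tx{lax}_{\Ea}}"W";"E"};
{\ar@{->}_-{[\gamma]_{k'} \otimes [\gamma]_k}"W";"X"};
{\ar@{->}^-{[\gamma]_{k'\otimes k}}"E";"Y"};
{\ar@{->}_-{\tx{lax}_{\HOM}}"X";"Y"};
\endxy
\]
commutes, where the bottom arrow is the laxity of $\HOM(\F,\G)$ from Lemma \ref{canon-lax-lem}. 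Both composites are maps into the equalizer $\int_a\G\alpha_{(k'\otimes k),a}$, so, exactly as in the proofs of Lemmas \ref{lem_adj} and \ref{canon-lax-lem}, it is enough to check that after tensoring with $\F(m)$ and postcomposing with the canonical evaluation of Remark \ref{rmk_eval} the two composites agree, for every $m\colon c\to d$ in $\C$. The top--right composite evaluates, using the characterising factorisation of $[\gamma]_{k'\otimes k}$ (Remark \ref{rmk-fact}), to $\gamma(m,\,k'\otimes k)\circ(\Id_{\F(m)}\otimes\tx{lax}_{\Ea})$. For the left--bottom composite I first rewrite $\tx{ev}\circ(\Id\otimes\tx{lax}_{\HOM})$ by means of the very construction of the $\HOM$-laxity in Lemma \ref{canon-lax-lem}, reducing it to the separate evaluations against $\alpha_{k'}$ and $\alpha_{k}$ glued by the laxity of $\G$; applying the factorisations of $[\gamma]_{k'}$ and $[\gamma]_{k}$ then turns it into a composite of $\gamma(-,k')$ and $\gamma(-,k)$. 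Equating the two sides is then nothing but the coherence of $\gamma$ as a transformation $\F\boxtimes\Ea\to p^{\star}\G$ under horizontal composition of $1$-morphisms of $\C\times\Ba$ --- the $\gamma$-analogue of the diagram $(\ref{eq1})$ that was verified for the evaluation in the proof of Proposition \ref{eval_bc}. Once both evaluations are identified with $\gamma(m,\,k'\otimes k)\circ(\Id_{\F(m)}\otimes\tx{lax}_{\Ea})$, the universal property of the equalizer forces the square to commute.

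For the unit laxities one checks that $[\gamma]_{\Id_b}$ sends the unit map $\varphi_b\colon I\to\Ea(\Id_b)$ to the unit laxity of $\HOM(\F,\G)$ at $\delta(b)=([p]_b,\sigma_b)$. This runs along the same lines but is lighter: it uses only the definition of $\epsilon_c$ and of $[\gamma]_{\Id_b}$ together with the left and right unit coherences of the lax functors $\F$ and $\Ea$ that already appeared in the Sub-lemma proved above. Assembling the three compatibilities gives the desired icon, and hence the morphism $[\gamma]=(\delta,\{[\gamma]_k\})$.

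I expect the composition-laxity step to be the genuine obstacle. It is the only point where the two independent universal constructions --- the laxity maps of $\HOM(\F,\G)$ and the comparison maps $[\gamma]_k$ --- must be shown to interlock, and this cannot be dispatched formally: one has to thread the composition coherence of $\gamma$ (the analogue of $(\ref{eq1})$) through the equalizer's universal property, pasting it against the defining diagram of the $\HOM$-laxity, much as in the proof of Proposition \ref{eval_bc}. The $2$-morphism and unit compatibilities, by contrast, are bookkeeping already rehearsed in the earlier lemmas. Note finally that this lemma is logically prior to the factorisation $(\ref{eq2})$: one needs $[\gamma]$ to be a morphism before the composite $\tx{ev}\circ(\Id\boxtimes[\gamma])$ can even be formed, so the coherence must be established directly rather than read off from $(\ref{eq2})$.
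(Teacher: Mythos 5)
Your proposal is correct and takes essentially the same route as the paper's own (deliberately sketchy) proof: both reduce the statement to the commutativity of the composition-laxity square $\Ea(k')\otimes\Ea(k)\to\int_a\G\alpha_{(k'\otimes k),a}$, verify it after tensoring with $\F(f)$ by threading the coherence of $\gamma$ (via the two decompositions of the pair $(f,k\otimes k')$) through the defining diagrams of the $\HOM$-laxity and the factorisations of the maps $[\gamma]_k$, and conclude by uniqueness of maps into the equalizer. Your explicit handling of the unit-laxity compatibility and your remark that the lemma must precede the factorisation (\ref{eq2}) merely spell out details the paper leaves to the reader.
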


\begin{proof}[Sketch of proof]
With all the previous material, we simply have to check that we have commutative diagrams: 
\begin{align*}
\xy
(-20,18)*+{\Ea(k) \otimes \Ea(k')}="W";
(-20,0)*+{\int_a \G \alpha_{k,a} \otimes \int_a \G \alpha_{k',a}}="X";
(20,0)*+{\int_a \G \alpha_{k\otimes k',a}}="Y";
(20,18)*+{\Ea(k\otimes k')}="E";
{\ar@{->}^-{}"X";"Y"};
{\ar@{->}_-{}"W";"X"};
{\ar@{->}^-{}_{}"E";"Y"};
{\ar@{->}^-{}"W";"E"};
\endxy
\end{align*}

This is not hard but tedious to check. One needs to use the following facts:
\begin{itemize}[label=$-$]
\item $\gamma$ is a morphism of lax functors; and therefore we have similar commutative diagrams; and 
\item the fact that for any $f: c \to d$ we have two presentations of the pair $(f, k\otimes k')$:
\begin{align*}
(f,k \otimes k')= (f, k) \otimes (\Id, k') \\
(f,k \otimes k')= (\Id, k) \otimes (f, k') 
\end{align*}
\end{itemize}

By the same techniques as in the proof of the previous sub-lemma, we build a (big) commutative diagram (starting at $ \F(f) \otimes \Ea(k) \otimes \Ea(k')$) . The universal property of $\int_a \G \alpha_{k\otimes k',a}$ gives the result (the uniqueness of maps going into an equalizer). We leave the details to the reader. 
\end{proof}
\subsection{The main theorem}
\begin{thm}\label{thm-adjunction}
Let  $\Ea: \Ba \to \M$, $\F: \C \to \M$ and $\G: \D \to \M$
be three lax functors in a complete and cocomplete symmetric monoidal category $\M$.\\

Then there is an isomorphism of category 
$$\Hom[\F \boxtimes \Ea, \G] \cong \Hom[\Ea, \HOM(\F,\G)]$$
which is functorial in the $3$ variables. 
\end{thm}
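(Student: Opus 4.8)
The plan is to exhibit the isomorphism as a pair of mutually inverse functors and to check that the correspondence, which is forced at the level of objects by a universal property, propagates to morphisms and $2$-cells. Concretely, I will define
$$\Phi \colon \Hom[\Ea, \HOM(\F,\G)] \to \Hom[\F\boxtimes\Ea, \G], \qquad \phi \mapsto \tx{ev}\circ(\Id_{\F}\boxtimes \phi),$$
using the evaluation of Proposition \ref{eval_bc}, and
$$\Psi \colon \Hom[\F\boxtimes\Ea, \G]\to \Hom[\Ea, \HOM(\F,\G)], \qquad \gamma \mapsto [\gamma],$$
the transpose constructed in Lemma \ref{lem_adj} and the lemma following it. The backbone of the argument is already in place: Lemma \ref{lem_adj} produces, for each $\gamma$, the component maps $\Ea(k)\to \int_a\G\alpha_{k,a}$ through the universal property of the classifying equalizer, and these assemble with the lift $\delta \colon \Ba \to B(\F,\G)$ into the morphism $[\gamma]$, while Remark \ref{rmk-fact} records that this is precisely the data realizing the factorization (\ref{eq2}).

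First I would settle the bijection on objects. By construction $[\gamma]$ satisfies $\tx{ev}\circ(\Id_\F\boxtimes[\gamma])=\gamma$, i.e. $\Phi\Psi(\gamma)=\gamma$, which is exactly (\ref{eq2}). For the reverse composite, given $\phi\colon\Ea\to\HOM(\F,\G)$ I set $\gamma:=\tx{ev}\circ(\Id_\F\boxtimes\phi)$ and observe that $\phi$ itself makes the factorization (\ref{eq2}) hold for this $\gamma$; since the component maps $\Ea(k)\to\int_a\G\alpha_{k,a}$ are the \emph{unique} maps into the equalizer compatible with the evaluation (the uniqueness established in the proof of Lemma \ref{lem_adj}), we must have $\Psi(\gamma)=[\gamma]=\phi$. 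Hence $\Phi$ and $\Psi$ are mutually inverse on objects.

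Next I would upgrade this to morphisms, i.e. to nerve-transformations. The key point is that every ingredient used to build $[\gamma]$ is a universal map out of, or into, an equalizer, and such maps are automatically functorial: a nerve-transformation $\Theta\colon\gamma_1\to\gamma_2$ consists of a strict $2$-transformation together with families of $2$-cells subject to the hexagon axiom, and applying the classifying functor $\int_{\sigma_1}^{\sigma_2}\G(-)$ of Proposition \ref{prop-funct-integ} to the underlying modification, together with transporting the component $2$-cells through the defining equalizer, yields a nerve-transformation $[\Theta]\colon[\gamma_1]\to[\gamma_2]$; conversely, postcomposing with $\tx{ev}$ and whiskering by $\Id_\F\boxtimes(-)$ carries a nerve-transformation upstairs to one downstairs. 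That these two assignments respect identities and the horizontal composition of nerve-transformations (hence define functors) follows from the coherence of $\G$ exploited in Lemma \ref{canon-lax-lem} together with the functoriality statements of Proposition \ref{prop-funct-integ} and Lemma \ref{canon-lax-lem}(3); and they are mutually inverse for the same uniqueness reason as on objects, since a morphism into the equalizer is determined by its composites with the projections. The correspondence likewise matches modifications, so the isomorphism is one of $2$-categories.

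Finally, naturality in the three variables $\Ea$, $\F$, $\G$ is a formal consequence of the fact that $\Phi$ and $\Psi$ are built entirely from constructions---the point-wise tensor $\boxtimes$, the evaluation, the adjunction $\otimes\dashv\HOM$ in $\M$, and the equalizers defining $\int_{\sigma_1}^{\sigma_2}\G\alpha$---each of which is itself natural; one checks that a morphism in any of the three arguments commutes with $\Phi$ and $\Psi$ by chasing it through these universal properties, which is routine. The main obstacle I anticipate is not the object-level bijection, which is immediate from the universal property, but the morphism-level verification: making precise that $\boxtimes$ and postcomposition with $\tx{ev}$ act functorially on the hom-categories of nerve-transformations, and that transporting a nerve-transformation through the defining equalizers respects horizontal composition. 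This is exactly where the coherence of the lax functor $\G$ must be invoked repeatedly, as in the gluing-of-hexagons arguments of Section 3 and the proof of Lemma \ref{canon-lax-lem}.
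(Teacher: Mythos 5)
Your proof follows the paper's route exactly: the paper's entire proof is the one line ``combine the previous lemma together with Remark \ref{rmk-fact}'', i.e.\ the object-level bijection given by the unique factorization (\ref{eq2}) of $\gamma$ through the evaluation of Proposition \ref{eval_bc} against the transpose $[\gamma]$ of Lemma \ref{lem_adj}, which is precisely your pair $\Phi$, $\Psi$. The extra verifications you supply --- the uniqueness argument showing $\Psi\Phi=\Id$, the morphism-level correspondence via Proposition \ref{prop-funct-integ}, and the naturality check --- are exactly the details the paper delegates to its preceding lemmas and to the reader, so your write-up is the same argument carried out more completely.
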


\begin{proof}
Combine the previous lemma together with Remark \ref{rmk-fact}.
\end{proof}

\bibliographystyle{plain}
\bibliography{Bibliography_These}
\end{document}